\documentclass[11pt]{article}
\usepackage[textwidth=6.4in,top=1in,bottom=1in,hcentering]{geometry}
\usepackage[T1]{fontenc}
\usepackage{lmodern}
\usepackage{amsmath,amsthm,amssymb,mathtools,graphicx,booktabs}
\usepackage{microtype,placeins}
\usepackage{xurl}
\usepackage[colorlinks=true,linkcolor=blue,citecolor=blue,urlcolor=blue]{hyperref}
\hypersetup{pdftitle={Endpoint-Stable Interval Inverse Inequalities for Multiscale Kernels with Positive Exponential Spectra},pdfsubject={Positive exponential spectra and sharp stability rates}}
\newtheorem{theorem}{Theorem}[section]
\newtheorem{lemma}[theorem]{Lemma}
\newtheorem{proposition}[theorem]{Proposition}
\newtheorem{corollary}[theorem]{Corollary}
\theoremstyle{definition}
\newtheorem{definition}[theorem]{Definition}
\newtheorem{example}[theorem]{Example}
\newcommand{\R}{\mathbb R}

\newcommand{\norm}[1]{\left\lVert#1\right\rVert}
\newcommand{\abs}[1]{\left\lvert#1\right\rvert}

\DeclareMathOperator{\spanof}{span}
\DeclareMathOperator{\Rea}{Re}
\DeclareMathOperator{\Ima}{Im}
\newcommand{\doi}[1]{\href{https://doi.org/#1}{doi:\nolinkurl{#1}}}
\title{Endpoint-Stable Interval Inverse Inequalities for Multiscale Kernels with Positive Exponential Spectra}
\author{Chengming Li \qquad Qiming Wang\thanks{Email: \href{mailto:wqm@hhu.edu.cn}{\nolinkurl{wqm@hhu.edu.cn}}}\\[3pt]
\small School of Mathematics, Hohai University, Nanjing 210000, Jiangsu, China}
\date{}
\begin{document}
\maketitle

\begin{abstract}
Inverse inequalities are essential for controlling derivative growth and ensuring stable sampling in kernel approximation. Existing whole-space and single-scale estimates provide an established analytical basis, but their extension to multiscale analytic spaces on finite intervals requires uniform control of boundary effects and cross-scale cancellation. This paper establishes unweighted interval \(L^2\) inverse inequalities for spaces generated by primitives of kernels with positive exponential spectra. By retaining exterior diagonal energy and controlling off-diagonal interactions through one-sided estimates with endpoint-distance weights, we derive uniform coefficient bounds for normalized high derivatives. A synthesis-operator perturbation argument extends these bounds to positive spectral mixtures, while adjacent-derivative comparison and interval interpolation yield an inverse bound of order \(h_*^{-1}\), where \(h_*\) is the smallest scale. Under geometric scale separation and within-scale centre separation, the constant is independent of the number of scales and centres and their distances from the endpoints, allowing endpoint centres and coincident centres across scales. The coefficient lower bound approaches the optimal limit \(1/2\). Two-centre constructions establish the sharpness of the scale exponent for every fixed kernel in the class and, for a single exponential spectrum, a sharp deficit of order \(q^{-1/2}\) from \(1/2\) for the best uniform coefficient lower bound, where \(q\) is the derivative order. Numerical illustrations for Cauchy and logistic kernels demonstrate coefficient stability in several multiscale configurations. These results extend interval inverse estimates to multiscale analytic kernel spaces with endpoint centres and provide quantitative conditions for deterministic sampling stability.
\end{abstract}
\noindent\textbf{Keywords.} Interval inverse inequality; positive exponential spectrum; multiscale kernel space; endpoint stability; Gram matrix; stable sampling.

\section{Introduction}\label{sec:introduction}
Restricting a kernel space to an interval creates a stability problem that whole-line estimates alone cannot resolve. A whole-line-normalized derivative atom of a real even kernel centred at an endpoint places exactly half its energy on the adjacent exterior half-line---a contribution far from negligible. Individual atom lower bounds on interval energy are, moreover, insufficient to preclude cancellation among linear combinations. Both difficulties persist for spaces generated by kernel primitives at several scales.

For $I=(0,1)$, set
\[
 L(V)=\sup_{0\ne f\in V}\frac{\norm{f'}_{L^2(I)}}{\norm{f}_{L^2(I)}}.
\]
We prove $L(V)\le Ch_*^{-1}$ for kernels with a positive exponential spectral representation, geometrically separated scales, and centres separated within each scale. Here $h_*$ is the smallest scale, and $C$ depends only on the fixed kernel and the separation parameters. Centres may lie anywhere in $[0,1]$; no nesting or cross-scale centre separation is required. Throughout, coefficients are real and norms are unweighted. We write $\norm{\cdot}_2=\norm{\cdot}_{L^2(I)}$ unless another domain is specified.

The main result is a configuration-uniform coefficient bound for normalized high derivatives, with a lower constant that approaches $1/2$---the largest value compatible with endpoint centres. The proof separates exterior diagonal energy from off-diagonal interactions, controlling the latter with endpoint-distance weights to obtain an estimate valid for all coefficient vectors, not merely individual atoms. A synthesis-operator perturbation argument then extends these bounds to positive spectral mixtures, while adjacent-derivative comparison combined with interval interpolation yields the inverse inequality for the primitive space, including its affine part. Figure~\ref{fig:mechanism} illustrates the interval argument.

Two constructions establish sharpness with respect to distinct parameters. For every fixed kernel, a four-dimensional space with two centres at one scale attains the order $h_*^{-1}$. For a single exponential spectrum, two centres at different scales---one fixed at an endpoint and the other approaching it---yield a Gram eigenvalue of the form $1/2-\kappa_q n^{-1/2}+O(n^{-1})$, where $n=q+\beta+1$ and $\kappa_q$ is positive on both parity subsequences. Together with the uniform upper estimate on the deficit, this identifies the optimal rate at which the coefficient lower bound approaches $1/2$.

\subsection{Relation to earlier inverse estimates}\label{sec:related}
Ward~\cite{ward2010} establishes whole-space $L^p$ Bernstein inequalities for radial basis functions under algebraic high-frequency Fourier bounds. Mirzaei~\cite{mirzaei2026} derives scale-uniform inverse inequalities for scaled kernels whose native spaces are Sobolev spaces, including bounded-domain estimates with $L^2$ as the weaker norm; in that framework, each trial space is generated by translates of a single commonly scaled kernel. Scale uniformity as the common scale varies is a fundamentally different problem from an estimate on a space that simultaneously contains several scales, since separate scale-component bounds do not preclude cancellation in their sum. Furthermore, the exponentially decaying spectra considered here do not satisfy the algebraic Fourier lower bounds underlying those results.

Bounded-domain kernel inverse estimates have been established in other settings. Hangelbroek, Narcowich, Rieger, and Ward~\cite{hangelbroek2018} bound Sobolev norms by bounded-domain $L^p$ norms via localized Lagrange bases for Mat\'ern and surface spline kernels. The kernel and trial-space assumptions in that work differ from the positive exponential spectra and prescribed multiscale primitive spaces considered here. The issue, therefore, is not the absence of bounded-domain inverse inequalities, but the coefficient stability of these particular multiscale generators when their centres reach the boundary.

Analytic kernels and multiscale Bernstein estimates each have well-developed theories. Mhaskar~\cite{mhaskar2005} addresses Gaussian networks on Euclidean space, and Li and Cao~\cite{li2015} treat multiscale interpolation on the sphere; Bernstein estimates for spherical basis functions are established by Mhaskar, Narcowich, Prestin, and Ward~\cite{mhaskar2008}. The present setting involves a distinct combination: positive exponential spectra, multiple scales within a single trial space, an unweighted interval norm, and no requirement of cross-scale centre separation. Noncompact spectral support alone is not the distinguishing feature.

For a single exponential spectrum, the positive-frequency components of the derivative atoms coincide with Cauchy--Paul wavelets. Their connections to Bergman spaces~\cite{abreu2007} and to the interpolation theory surveyed by Seip~\cite[Theorem~3]{seip1998} supply the relevant whole-line context; the whole-line bound required here is obtained directly from Gram entries. When centres are buffered from the endpoints by a fixed multiple of their scale, exterior tails become negligible at high derivative order. This approach fails uniformly as the buffer vanishes, however, since an endpoint atom retains exactly half its energy outside the interval. The decisive element is a one-sided estimate with endpoint-distance weights that remain uniformly summable over all admissible centres, while the exterior diagonal is retained explicitly.

Logistic primitives furnish a representative example. Huybrechs and Trefethen~\cite{huybrechs2024} relate sigmoid approximation, rational approximation, and logarithmic changes of variables, while Min~\cite{min1998} proves inverse estimates for rational functions with prescribed poles. Applying such estimates to a prescribed logistic space requires control of the transformed norm and the correspondence between pole data and physical scales; the approach here operates directly with the original generators instead. Constructive sigmoid approximation, as in Yu and Cao~\cite{yu2025}, addresses a distinct question: bounding the error of selected approximants rather than controlling derivatives of every element in a fixed trial space.

Section~\ref{sec:main} collects the assumptions, main theorems, and kernel examples. Section~\ref{sec:proof} establishes interval coefficient stability and the inverse inequality. Section~\ref{sec:examples} demonstrates sharpness in scale and derivative order, including the two-atom Gram expansion. Section~\ref{sec:gram-experiment} presents multiscale Gram computations, and Section~\ref{sec:sampling} derives a sampling stability consequence.

\begin{figure}[tbp]
\centering
\includegraphics[width=\linewidth]{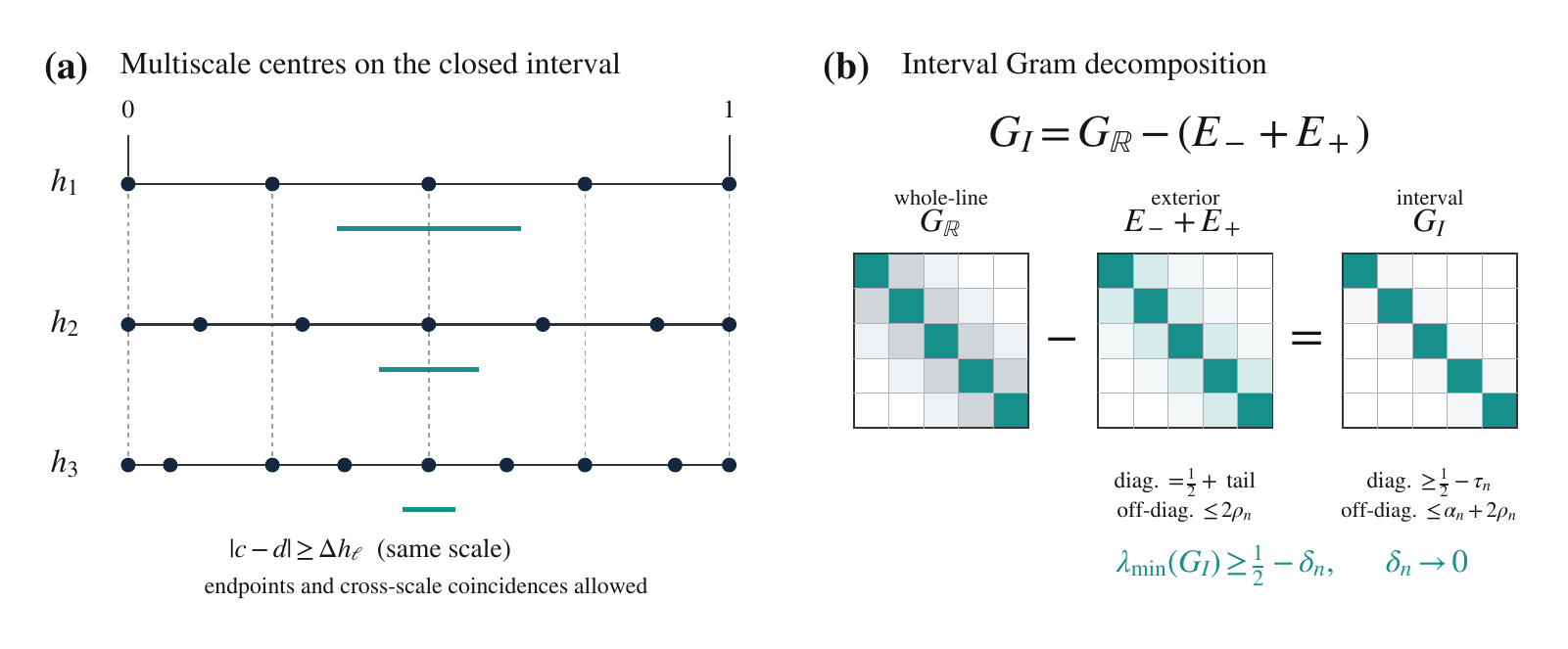}
\caption{Interval stability with endpoint centres. Scale rows indicate admissible cross-scale coincidences. For single-exponential atoms, $G_I=G_{\R}-E_- -E_+$, where $E_\pm$ denote the exterior half-line Gram matrices. An endpoint atom contributes exactly half its energy to the adjacent exterior diagonal, together with a residual tail beyond the opposite endpoint. The interval diagonal is at least $1/2-\tau_n$, and the off-diagonal row sum is bounded above by $\alpha_n+2\rho_n$; thus $\lambda_{\min}(G_I)\ge1/2-\delta_n$ with $\delta_n\to0$. Matrix shading distinguishes diagonal entries from off-diagonal interactions; the diagram is schematic. Section~\ref{sec:proof} extends this bound to positive spectral mixtures.}\label{fig:mechanism}
\end{figure}

\section{Kernels, configurations, and inverse inequalities}\label{sec:main}
We use the Fourier transform convention $\widehat K(\xi)=\int_\R e^{-ix\xi}K(x)\,dx$.
\begin{definition}[Positive exponential spectral representation]\label{def:kernel}
Let $K\in C^\infty(\R)\cap L^1(\R)$ be real and even. We say that $K$ has a positive exponential spectral representation if there are $a>0$, a nonnegative integer $\beta$, and a finite or countable family $A_j\ge1$, $w_j>0$, with $A_0=1$, such that
\begin{equation}\label{eq:spectrum}
 \widehat K(\xi)=|\xi|^\beta\sum_{j\ge0}w_je^{-aA_j|\xi|}\quad(\xi\ne0),
 \qquad M_s:=\sum_j\frac{w_j}{w_0}A_j^{-s}<\infty
\end{equation}
for some integer $s\ge1$. We write $\Sigma(t)=\int_0^tK(y)\,dy$.
\end{definition}
Positivity in \eqref{eq:spectrum} refers to the spectral weights, not to the values of $K$. If $K\ge0$ and $\int_\R K=1$, a suitable additive constant makes $\Sigma$ a monotone transition from $0$ to $1$. All kernel parameters are fixed throughout.

Since $\sup_{A\ge1}A^se^{-a(A-1)}<\infty$, \eqref{eq:spectrum} gives $|\widehat K(\xi)|\le C|\xi|^\beta e^{-a|\xi|}$ for $|\xi|\ge1$. Near zero, $\widehat K$ is bounded because $K\in L^1(\R)$. Fourier inversion therefore extends $K$ holomorphically to every strip $|\Ima z|<a'<a$ and gives $K^{(q)}\in L^2(\R)$ for every $q\ge0$.

Fix $\lambda>1$ and $\Delta>0$. We consider finite configurations satisfying
\begin{equation}\label{eq:geometry}
 \begin{gathered}
 0<h_L<\cdots<h_1\le1,\qquad h_{\ell+1}\le h_\ell/\lambda,\qquad X_\ell\subset[0,1],\\
 |c-d|\ge\Delta h_\ell\qquad(c\ne d\in X_\ell).
 \end{gathered}
\end{equation}
No separation is imposed between centres at different scales. Set $h_*=h_L$ and
\begin{equation}\label{eq:space}
 V=\spanof\left\{1,x,\Sigma\!\left(\frac{x-c}{h_\ell}\right):1\le\ell\le L,\ c\in X_\ell\right\}\Big|_I.
\end{equation}
\begin{theorem}[Interval inverse inequality]\label{thm:inverse}
Under Definition~\ref{def:kernel} and \eqref{eq:geometry}, there is a constant $C=C(K,\lambda,\Delta)<\infty$ such that
\begin{equation}\label{eq:inverse}
 \norm{f'}_{L^2(I)}\le Ch_*^{-1}\norm{f}_{L^2(I)}\qquad(f\in V).
\end{equation}
The constant is independent of the number of scales and centres, the individual scales, and the distances of the centres from the endpoints. For each fixed integer $k\ge1$, there is also a constant $C_k=C_k(K,\lambda,\Delta)$ such that $\norm{f^{(k)}}_2\le C_kh_*^{-k}\norm{f}_2$. The generators in \eqref{eq:space} are linearly independent, so $\dim V=2+\sum_\ell\#X_\ell$.
\end{theorem}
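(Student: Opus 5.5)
The plan follows the mechanism described in the introduction and Figure~\ref{fig:mechanism}. Write $f=\alpha+\gamma x+\sum_{\ell}\sum_{c\in X_\ell}b_{\ell,c}\,\Sigma((x-c)/h_\ell)$. The idea is to pass to a high derivative, where the affine part disappears and the atoms become nearly orthogonal on $I$. Fix an order $q$, to be chosen large depending only on $K,\lambda,\Delta$. Then
\[
f^{(q+1)}=\sum b_{\ell,c}h_\ell^{-q-1}K^{(q)}((x-c)/h_\ell).
\]
Normalize the atoms $\phi_{\ell,c}=h_\ell^{-1/2}K^{(q)}((\cdot-c)/h_\ell)/\norm{K^{(q)}}_{L^2(\R)}$, and set $\mu_{\ell,c}=b_{\ell,c}h_\ell^{-q-1/2}\norm{K^{(q)}}_{L^2(\R)}$, so that $f^{(q+1)}=\sum\mu_{\ell,c}\phi_{\ell,c}$. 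The first goal is a two-sided Riesz bound on the interval,
\[
(1/2-\delta_q)\sum\mu^2\le\norm{f^{(q+1)}}_2^2\le C\sum\mu^2,
\]
uniform in the configuration.

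\textbf{Single exponential spectrum.} For $\widehat K=|\xi|^\beta e^{-a|\xi|}$, every Gram entry is explicit. The whole-line entry is a Cauchy-type kernel, $\langle\phi,\psi\rangle_{\R}\propto\Rea\big[(\text{scale ratio})^{n}\big/\bigl(i(c-d)+a(h+h')\bigr)^{2n}\cdots\big]$ with $n=q+\beta+1$. It decays like $\bigl(2\sqrt{hh'}/(h+h')\bigr)^{2n}$ across scales and like $(1+|c-d|^2/(h+h')^2)^{-n}$ within a scale. With geometric scale separation and $\Delta$-separation, the Schur row sums of the off-diagonal whole-line Gram tend to $0$ as $q\to\infty$. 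One must be careful at this point: coincident centres at different scales are controlled only by the scale factor $(2\sqrt{\lambda}/(1+\lambda))^{2n}$ summed geometrically over $\ell$.

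\textbf{Passage to the interval.} Write $G_I=G_\R-E_--E_+$. The diagonal of $E_\pm$ is at most $1/2$ by evenness, which gives an interval diagonal of at least $1/2-\tau_n$. The off-diagonal parts of $E_\pm$ are the main obstacle. A Cauchy--Schwarz bound on them loses half of each diagonal and gives nothing uniform. The approach I would take instead is a one-sided weighted estimate. An atom at distance $t$ from $0$ has exterior $L^2$ mass on $(-\infty,0)$ of order $(1+t/h)^{-(2n-1)}$. I would bound $|E_-(\phi,\psi)|$ by the product of $\sqrt{w_\phi w_\psi}$ with these endpoint-distance weights. For each fixed scale, the weights are summable over $\Delta$-separated centres uniformly in $q\ge q_0$, and they are geometric across scales. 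A Schur test with these weights as test vector then bounds the off-diagonal row sums by $\alpha_n+2\rho_n\to0$.

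\textbf{Positive mixtures.} Write $K^{(q)}=w_0K_0^{(q)}+R$, where the $j\ge1$ terms have spectra $e^{-aA_j|\xi|}$. Their synthesis operators have norm relative to the leading term bounded by $\sum_j(w_j/w_0)A_j^{-(q+\beta+1/2)}$. This is small for large $q$ whenever some $A_j>1$ contributes, using $M_s<\infty$. Terms with $A_j=1$ merge into $w_0$. I would control $R$ by the same whole-line Gram bounds, since each piece is a dilate of the single spectrum. A perturbation of the synthesis operator then keeps the lower Riesz constant positive and the upper one bounded.

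\textbf{Conclusion.} Apply the same Riesz bounds at orders $q$ and $q+1$. This gives $\norm{f^{(q+2)}}_2\le Ch_*^{-1}\norm{f^{(q+1)}}_2$, because each coefficient rescales by $h_\ell^{-1}\le h_*^{-1}$ (adjacent-derivative comparison). The affine part is invisible here, so it has to be handled separately. For this I would use the interval interpolation/Gagliardo--Nirenberg inequality
\[
\norm{g'}_2\le\epsilon\norm{g^{(m)}}_2+C\epsilon^{-1/(m-1)}\norm{g}_2
\]
on $I$, with $\epsilon=h_*^{m-1}$. A reverse step is also needed: $\norm{f^{(q+1)}}_2\lesssim h_*^{-q}\norm{f'}_2$, obtained from the Riesz bounds at orders $1,\dots,q$ with $q$ now fixed. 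Alternatively, one can induct downward using the lower bound at each fixed order, which holds for fixed $q$ after one verifies that the constants remain positive. Combining these and absorbing gives \eqref{eq:inverse}, and iterating gives the $C_k$ bound.

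\textbf{Linear independence.} The positive lower Riesz bound shows that the $\Sigma$-generators are independent modulo polynomials of degree $\le q$. Polynomials cannot be represented by these generators, since their $(q+1)$-st derivative vanishes while the Riesz bound forces $\mu=0$. Hence $\dim V=2+\sum\#X_\ell$.
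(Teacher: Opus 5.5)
Your overall architecture matches the paper's: high-order coefficient bounds via $G_I=G_\R-E_--E_+$ with the exterior diagonal retained, a synthesis-operator perturbation for mixtures, adjacent-derivative comparison, and the same independence argument. However, the two load-bearing steps are not supplied, and the first would fail as you describe it.

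\textbf{Exterior off-diagonal terms.} Any bound on $\abs{E_-(\phi,\psi)}$ built from products of the exterior masses $(1+t/h)^{-(2n-1)}$ has no factor that tends to zero when both centres sit at the endpoint. It is Cauchy--Schwarz in another form. It does work within one scale: of two $\Delta h$-separated centres, one is at normalized distance at least $\Delta/a$ from the endpoint, which is \eqref{eq:samescale}. It fails across scales. Condition \eqref{eq:geometry} allows $0\in X_\ell$ for every $\ell$, and each such atom has exterior mass close to $1/2$. So these weights are not geometric across scales, and your weighted Schur sums grow like $L/2$.

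No magnitude-only estimate can repair this. For $c=d=0$ and fixed $h/H$, the absolute integral $\int_{-\infty}^0\abs{\psi_{h,c}\psi_{H,d}}$ stays of size $\sqrt{h/H}$ as $n\to\infty$, so the row sums stay of order $(\sqrt\lambda-1)^{-1}$. The signed integral, by contrast, is exponentially small there, and it is exactly of order $n^{-1/2}$ for the moving centre of Proposition~\ref{prop:moving}.

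The missing idea is the cancellation in Lemma~\ref{lem:onesided}. In the fine-scale variable $u$, the products of real parts carry phases $\Phi_\pm=\arctan u\pm\arctan v$, with $v=v_0+r(u-u_0)$ and $r\le\lambda^{-1}$. These satisfy $\abs{\Phi_\pm'}\ge\kappa>0$ on $[u_0,\varepsilon]$, so integration by parts gains $n^{-1}$ against $J_n\asymp n^{-1/2}$. The region $u\ge\varepsilon$ is exponentially small. The resulting bound $e_n\sqrt{h/H}\,w(c/(ah))\,w(d/(aH))$ has $e_n=O(n^{-1/2})$ and $n$-independent weights $w(u)=(1+u^2)^{-1}$, and it is uniform in the endpoint offsets. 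The factor $\sqrt{h/H}$, not the weights, gives the geometric sum over scales. The weights are summable within each scale by \eqref{eq:weights_count}, and together these give $\rho_n\to0$.

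\textbf{Lowering the order.} Your reverse step $\norm{f^{(q+1)}}_2\lesssim h_*^{-q}\norm{f'}_2$ is itself an order-$q$ inverse inequality for $\spanof\{1,K((\cdot-c)/h_\ell)\}$, so it is as hard as the theorem. Obtaining it from Riesz bounds at orders $1,\dots,q$ needs uniform interval lower bounds at fixed low orders. Neither your argument nor the paper's provides these, since $e_n,\alpha_n,\rho_n$ are small only for $q\ge q_0$.

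Interpolate the intermediate derivative instead. Lemma~\ref{lem:interpolation} with $k=m=q+1$ gives $\norm{f^{(q+1)}}_2\le A_{q+1}(N^{q+1}\norm{f}_2+N^{-1}\norm{f^{(q+2)}}_2)$. The adjacent bound $\norm{f^{(q+2)}}_2\le Mh_*^{-1}\norm{f^{(q+1)}}_2$ lets you absorb the last term once $N\asymp h_*^{-1}$. A second application with $N=\lceil1/h_*\rceil$ then covers every $1\le k\le q+2$, affine part included (Proposition~\ref{prop:lowering}).

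A smaller point concerns the mixture step: the dilated synthesis operators are not uniformly bounded. At scale $A_jh_\ell$ the relative separation is $\Delta/A_j$, and the squared Bessel bound grows like $A_j$; see \eqref{eq:bessel_A}. The perturbation is therefore $\sum_{A_j>1}w_jA_j^{-m}$, not $\sum w_jA_j^{-(m+1/2)}$, and $M_s<\infty$ still controls it for $m\ge s$.
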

The estimate is unchanged if centres are deleted or moved while preserving within-scale separation. Affine rescaling gives the same statement on an interval of length $b$, in physical scales, provided the largest scale is at most $b$. If there are no kernel generators, the assertion reduces to the inverse inequality for affine functions.

Define the normalized derivative atoms by
\begin{equation}\label{eq:atoms}
 N_q=\norm{K^{(q)}}_{L^2(\R)}^2,\qquad \Psi_q=N_q^{-1/2}K^{(q)},
 \qquad\psi^{(q)}_{h,c}(x)=h^{-1/2}\Psi_q((x-c)/h).
\end{equation}
\begin{theorem}[Interval coefficient bounds]\label{thm:gram}
For every $0<\zeta<1/2$, there is $q_0=q_0(K,\lambda,\Delta,\zeta)$ such that, for every integer $q\ge q_0$, every finite configuration satisfying \eqref{eq:geometry}, and every real coefficient family,
\begin{equation}\label{eq:gram}
 \left(\frac12-\zeta\right)\sum_{\ell,c}b_{\ell c}^2
 \le\norm{\sum_{\ell,c}b_{\ell c}\psi^{(q)}_{h_\ell,c}}_{L^2(I)}^2
 \le(1+\zeta)\sum_{\ell,c}b_{\ell c}^2.
\end{equation}
\end{theorem}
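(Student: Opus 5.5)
We first treat the single-exponential case $\widehat K(\xi)=|\xi|^\beta e^{-a|\xi|}$. Then $\widehat{K^{(q)}}(\xi)=(i\xi)^q|\xi|^\beta e^{-a|\xi|}$, and with $n=q+\beta+1$ the Gram entries become explicit. Up to normalization, $\langle\psi^{(q)}_{h,c},\psi^{(q)}_{g,d}\rangle_{L^2(\R)}$ is the real part of a Cauchy-type kernel:
\[
\left(\tfrac{2\sqrt{hg}}{h+g}\right)^{n}\Rea\Bigl(1+i\tfrac{c-d}{a(h+g)}\Bigr)^{-n}\ \text{times a phase}.
\]
These entries decay like $\min(h/g,g/h)^{n/2}$ across scales and like $(1+|c-d|^2/(a^2(h+g)^2))^{-n/2}$ within a scale. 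By the Schur test, geometric scale separation ($\lambda>1$) and within-scale separation ($\Delta$) give a row sum of off-diagonal whole-line entries $\alpha_n\to0$ as $n\to\infty$. This sum is uniform in the number of scales and centres, since it is dominated by $\sum_{k\ge1}\lambda^{-kn/2}\cdot(1+\#\{\text{centres of scale }h_{\ell+k}\text{ near }c\})$. The cross-scale count grows like $\lambda^k$, but $\lambda^{-kn/2}\lambda^{k}$ is still summable once $n>2$, after a more careful weighted Schur bound using row weights $h^{1/2}$. This gives the upper bound $\|\sum b\psi\|_{L^2(\R)}^2\le(1+\alpha_n)\|b\|^2$, which already yields the right inequality in \eqref{eq:gram} because $\|\cdot\|_{L^2(I)}\le\|\cdot\|_{L^2(\R)}$.

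For the lower bound we write $G_I=G_\R-E_--E_+$, with $E_-$ and $E_+$ the Gram matrices on $(-\infty,0)$ and $(1,\infty)$. The diagonal of $E_-$ is $\int_{-\infty}^{-c/h}|\Psi_q|^2\le\frac12$, by evenness of $|\Psi_q|^2$. Adding the opposite-tail contribution $\int_{(1-c)/h}^\infty|\Psi_q|^2$ exceeds $1/2$ by at most $\tau_n$. Here $\tau_n\to0$ because $|\Psi_q(x)|^2$ concentrates at $|x|\lesssim a n^{-1/2}$ (its profile is $(1+x^2/a^2)^{-n}$ up to an oscillating factor) and $\max(c,1-c)\ge1/2\ge h_1/2$.

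The key step, and the main obstacle, is the off-diagonal part of $E_\pm$. The whole-line bound is useless here because we need the total error $G_\R-\mathrm{diag}$ plus $E_\pm-\mathrm{diag}(E_\pm)$ to stay small, not merely bounded by $1/2$. We will use the one-sided estimate
\[
|(E_-)_{(h,c),(g,d)}|\le\int_{-\infty}^0|\psi_{h,c}||\psi_{g,d}|\le \omega_n(c/h)^{1/2}\omega_n(d/g)^{1/2}\,\theta_n\bigl((h,c),(g,d)\bigr),
\]
where $\omega_n(t)=\int_{-\infty}^{-t}|\Psi_q|^2$ is an endpoint-distance weight. Both atoms must place mass near $0$, so the product $|\psi_{h,c}(x)\psi_{g,d}(x)|$ on $x<0$ is controlled by $(1+(x-c)^2/a^2h^2)^{-n/2}(1+(x-d)^2/a^2g^2)^{-n/2}$. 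A direct computation shows this is at most the $\R$-bound with an extra factor. The decisive observation is that, for a fixed scale, the sequence $\omega_n(c/h)$ over $\Delta h$-separated $c\ge0$ is summable, roughly $\sum_k(1+(k\Delta/a)^2)^{-n/2+1}$, with a sum tending to $\frac12$ in the worst case. Across scales, the mismatch factor $\min(h/g,g/h)^{n/2}$ makes it summable again. The Schur test with these weights then gives off-diagonal norm $\rho_n\to0$ for each of $E_\pm$. The difficulty is pairs at the same endpoint with coincident or nearby centres at adjacent scales, so the estimate needs $\lambda>1$ used explicitly. We expect the kernel $\int_{-\infty}^0(\cdots)$ to admit a closed form via the Cauchy structure of the positive-frequency part, and we will try that first. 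Collecting terms gives $\lambda_{\min}(G_I)\ge\frac12-\tau_n-\alpha_n-2\rho_n$.

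Finally, we pass to a general $K$ by a synthesis-operator perturbation. We write $K^{(q)}=\sum_jw_jK_j^{(q)}$ with $\widehat{K_j}=|\xi|^\beta e^{-aA_j|\xi|}$. The norms satisfy $\|K_j^{(q)}\|\asymp A_j^{-n+1/2}\|K_0^{(q)}\|$, so after normalizing, the $j\ge1$ terms have relative size $\sum_j(w_j/w_0)A_j^{-n+1/2}$. For $n\ge s+1$ this is at most $M_s\max_{j\ge1}A_j^{-(n-s)+1/2}$. Indices with $A_j$ close to $1$ are handled by noting that $K_j$ is a rescale of $K_0$ with scale factor in $[1,A]$, so its atoms still satisfy the upper bound $1+\alpha_n$ uniformly. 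This gives $\Psi_q=c_0\Psi^{0}_q+R_q$, with the synthesis operator of $R_q$ of norm $\epsilon_n\to0$, or with a convex-type combination of single-spectrum synthesis operators, each obeying both bounds. In the latter case the lower bound is obtained from the identity $\|\sum b\psi\|^2=\sum_{j,j'}\ldots$, where every cross term is nonnegative on the Fourier side only on $\R$, not on $I$, so we rely on the perturbative route. Then $\|Sb\|_I\ge\|S_0b\|_I-\|Rb\|$, and choosing $q_0$ with $\tau_n+\alpha_n+2\rho_n+O(\epsilon_n)\le\zeta$ finishes the proof of \eqref{eq:gram}.
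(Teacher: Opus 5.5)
\noindent Your whole-line Schur bound, the diagonal estimate and the splitting $G_I=G_\R-E_--E_+$ agree with the paper. The step you call decisive, however, fails as written.

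You bound $(E_-)_{(h,c),(g,d)}$ by $\int_{-\infty}^0\abs{\psi_{h,c}}\abs{\psi_{g,d}}$ and then attach a cross-scale factor $\min(h/g,g/h)^{n/2}$. That factor belongs to the \emph{signed} whole-line inner product and comes entirely from cancellation, so it does not survive absolute values. Take $c=d=0$, $H=h/r$ with $r\le\lambda^{-1}$, and $u=-x/(ah)$:
\[
\int_{-\infty}^0\abs{\psi_{h,0}\psi_{H,0}}\,dx
=\frac{2\sqrt r}{J_n}\int_0^\infty
\frac{\abs{\cos(n\arctan u-q\pi/2)\cos(n\arctan(ru)-q\pi/2)}}{(1+u^2)^{n/2}(1+r^2u^2)^{n/2}}\,du .
\]
The amplitude is concentrated on $u\lesssim n^{-1/2}$. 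There the two cosines oscillate at rates $\sqrt n$ and $r\sqrt n$ in the variable $\sqrt n\,u$, so the product of their moduli has a positive mean. Since $J_n\asymp n^{-1/2}$, the integral is of order $\sqrt r$ for every large $n$. Your weights cannot help, because $\omega_n(0)=1/2$ for both atoms.

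Now take endpoint atoms at the scales $\lambda^{-\ell}$, which is admissible. The off-diagonal row sums of $\abs{E_-}$ are then of size $\sum_k\lambda^{-k/2}$: bounded, but not small, and larger than $1/2$ when $\lambda$ is near $1$. The resulting bound for $\lambda_{\min}(G_I)$ is therefore vacuous. Retaining signs does not rescue an exponential factor either. Proposition~\ref{prop:moving} shows that for $d=0$ and $c_n=ah\pi/(2n)$ the signed one-sided interaction is exactly of order $\sqrt r\,n^{-1/2}$. Your hoped-for closed form covers only $c=d=0$, where the product is even and the one-sided integral is half of the exponentially small whole-line one. The real obstruction sits at centres a distance $O(h/n)$ from the endpoint.

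The missing idea is Lemma~\ref{lem:onesided}. Keep the signs and expand the product of real parts into two oscillatory integrals with amplitude $A(u)=(1+u^2)^{-n/2}(1+v^2)^{-n/2}$ and phases $\Phi_\pm=\arctan u\pm\arctan v$, where $v=v_0+r(u-u_0)$.
\begin{itemize}
\item Scale separation gives $\abs{\Phi_\pm'}\ge(1+\varepsilon^2)^{-1}-\lambda^{-1}>0$ on $[u_0,\varepsilon]$, with $\varepsilon=\sqrt{(\lambda-1)/2}$.
\item Since $A$ is monotone there, one integration by parts gives $A(u_0)/n$, hence $O(n^{-1/2})$ after dividing by $J_n$.
\item The part of the integral beyond $\varepsilon$ decays exponentially by power splitting.
\end{itemize}
This yields \eqref{eq:onesided} with the $n$-independent weight $w(u)=(1+u^2)^{-1}$, which is uniformly summable over $\Delta h$-separated centres. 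The factor $\sqrt{h/H}$ then sums geometrically, giving $\rho_n=O(n^{-1/2})$.

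In the mixture step your perturbative route is the right one, but two points need correcting. First, the Bessel bound for the dilated system is not uniform in $A$, because the relative separation becomes $\Delta/A$. The paper uses $\norm{U_{q,A}}^2\le C_{\mathrm B}^2A$ from \eqref{eq:bessel_A}, so the error is $\sum_{A_j>1}w_jA_j^{-m}$. Second, this error tends to zero by dominated convergence once all terms with $A_j=1$ are merged into the main coefficient $W$. Your bound $M_s\max_{j\ge1}A_j^{-(n-s)+1/2}$ tends to zero only when the $A_j$ have a gap above $1$.
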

The limiting coefficient lower bound $1/2$ is optimal: the square of any derivative of a real even kernel is even, so an atom centred at zero has at most half its whole-line energy in $I$. For a single exponential spectrum, Corollary~\ref{cor:gram_sharp} also identifies the sharp rate at which the best uniform lower bound approaches $1/2$.

\subsection{Examples and the scope of the spectral assumption}
\begin{example}[Logistic derivative]\label{ex:logistic}
Let $\sigma(t)=(1+e^{-t})^{-1}$. For fixed $\gamma>0$, set $K_\gamma(t)=\gamma e^{-\gamma t}/(1+e^{-\gamma t})^2$ and $a=\pi/\gamma$. Then
\begin{equation}\label{eq:logistic_spectrum}
 \widehat K_\gamma(\xi)=\frac{a\xi}{\sinh(a\xi)}
 =2a|\xi|\sum_{j\ge0}e^{-a(2j+1)|\xi|},\qquad\xi\ne0.
\end{equation}
Thus $\beta=1$, $A_j=2j+1$, $w_j=2a$, and $s=2$ are admissible, and $\Sigma(t)=\sigma(\gamma t)-1/2$.
To verify the transform, integrate $e^{itz}e^z/(1+e^z)^2$, for $t>0$, around a rectangle of height $2\pi$. The vertical sides vanish in the limit, the residue at $i\pi$ is $-it e^{-\pi t}$, and the upper horizontal integral is $-e^{-2\pi t}$ times the lower one. The integral on $\R$ is therefore $\pi t/\sinh(\pi t)$. Evenness and rescaling give \eqref{eq:logistic_spectrum}; direct integration gives the value $1$ at zero.
\end{example}
\begin{example}[Cauchy kernel]\label{ex:cauchy}
For $K_a(t)=a/[\pi(a^2+t^2)]$, the identity
\[
 \pi^{-1}\int_0^\infty e^{-a\xi}\cos(t\xi)\,d\xi=K_a(t)
\]
gives $\widehat K_a(\xi)=e^{-a|\xi|}$. Hence $\beta=0$ and $A_0=w_0=1$. The associated transition function is $1/2+\pi^{-1}\arctan(t/a)$. In particular, algebraic decay in physical space is compatible with the spectral assumption.
\end{example}
\begin{proposition}[Finite positive mixtures]\label{prop:finite_mixture}
If finitely many kernels $K_i$ satisfy Definition~\ref{def:kernel} with the same $\beta$, then every fixed positive mixture $K=\sum_i\omega_iK_i$, with $\omega_i>0$, also satisfies that definition.
\end{proposition}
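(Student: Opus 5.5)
The plan is to verify the conditions of Definition~\ref{def:kernel} for $K=\sum_i\omega_iK_i$ one at a time. Smoothness, integrability, reality and evenness pass to finite linear combinations at once, so all the work lies in the spectral representation \eqref{eq:spectrum} and the finiteness of $M_s$.

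\textbf{Spectral representation.} Each $K_i$ comes with parameters $a_i>0$, exponents $A^{(i)}_j\ge1$ with $A^{(i)}_0=1$, weights $w^{(i)}_j>0$, an exponent $s_i$, and the common $\beta$. By linearity of the Fourier transform,
\[
\widehat K(\xi)=|\xi|^\beta\sum_i\sum_j\omega_iw^{(i)}_je^{-a_iA^{(i)}_j|\xi|},\qquad \xi\neq0 .
\]
Set $a=\min_ia_i$, fix an index $i_0$ attaining the minimum, and write every exponent as $a\,B^{(i)}_j$ with $B^{(i)}_j=(a_i/a)A^{(i)}_j\ge1$. The term $(i_0,0)$ has $B=1$ and is placed first, playing the role of $A_0=1$ with $w_0=\omega_{i_0}w^{(i_0)}_0$. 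The combined family is a finite union of countable families, hence countable. The weights $\omega_iw^{(i)}_j$ are positive, and the double series converges absolutely for $\xi\ne0$, so it may be reindexed as a single series.

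\textbf{Summability.} Take $s=\max_is_i$. Since $A^{(i)}_j\ge1$, we have $(A^{(i)}_j)^{-s}\le(A^{(i)}_j)^{-s_i}$, and hence
\[
\sum_j w^{(i)}_j (B^{(i)}_j)^{-s}=(a/a_i)^s\sum_j w^{(i)}_j (A^{(i)}_j)^{-s}\le M^{(i)}_{s_i}\,w^{(i)}_0<\infty .
\]
Multiplying by $\omega_i$, summing over the finitely many $i$, and dividing by $w_0$ gives $M_s<\infty$. This completes the verification.

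The argument has no real obstacle. The only points needing care are the common $\beta$, which is essential because mixed powers of $|\xi|$ cannot be factored out, the choice of the minimal $a$ so that all normalized exponents are at least $1$, and the monotonicity of $A^{-s}$ in $s$ for $A\ge1$, which lets a single $s$ serve every component.
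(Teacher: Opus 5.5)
Your proof is correct and follows the paper's own argument. You take $a=\min_i a_i$, rescale the exponential lengths to $a_iA^{(i)}_j/a\ge1$ with weights $\omega_iw^{(i)}_j$, place a term attaining the minimum first, and choose $s=\max_i s_i$; your explicit bound $\sum_j w^{(i)}_j (B^{(i)}_j)^{-s}\le M^{(i)}_{s_i}w^{(i)}_0$ simply writes out the summability step that the paper states in one line.
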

\begin{proof}
Let $a_i$ be the minimum exponential length for $K_i$, and set $a=\min_i a_i$. The rescaled lengths $A'_{ij}=a_iA_{ij}/a$ are at least one, and the new weights are $\omega_iw_{ij}$. A minimum length is attained. Taking $s=\max_i s_i$ preserves the inverse-power summability condition. Smoothness, evenness, and integrability are preserved under finite sums.
\end{proof}
This includes finite positive mixtures of Cauchy kernels and, separately, of logistic derivatives. A countable mixture additionally requires integrability, smoothness, attainment of the minimum exponential length with positive weight, and the summability condition in \eqref{eq:spectrum}; these do not follow merely from the corresponding assumptions on each component. Since the two examples have different spectral powers, an arbitrary mixture of Cauchy kernels and logistic derivatives falls outside the present framework unless a representation with a common spectral power can be established.

Analyticity alone is insufficient. A Gaussian spectrum $e^{-c\xi^2}$ decays faster than every possible positive minimum term $w_0|\xi|^\beta e^{-a|\xi|}$, so Gaussian kernels do not belong to this class. Arbitrarily signed spectral mixtures are also excluded. These restrictions are inherent to the present theorem; they do not preclude inverse estimates for such kernels by other methods.

\paragraph{A prescribed logistic space.}
For fixed $\gamma>0$, consider
\begin{equation}\label{eq:prescribed}
 \begin{split}
 V_J=\spanof\biggl\{1,x,\sigma\!\left(\frac{\gamma(x-c_{\ell r})}{h_\ell}\right):\;&
 h_\ell=2^{-\ell},\quad c_{\ell r}=(2r-1)h_\ell,\\
 &1\le\ell\le J,\quad1\le r\le2^{\ell-1}\biggr\}.
 \end{split}
\end{equation}
Here $\lambda=\Delta=2$. Theorem~\ref{thm:inverse} gives $L(V_J)\lesssim_\gamma2^J$ and $\dim V_J=2^J+1$. Corollary~\ref{cor:dimension} provides the matching lower bound. Unlike a general sparse configuration, the reciprocal minimum scale is here comparable to the dimension.

\section{One-sided stability and the inverse inequality}\label{sec:proof}
We first prove the coefficient bounds for a single exponential spectrum. We then pass to positive spectral mixtures in synthesis-operator norm and deduce the inverse inequality by lowering the derivative order.

\subsection{Single-exponential atoms and endpoint phases}
Let $\widehat K_1(\xi)=|\xi|^\beta e^{-a|\xi|}$, set $n=q+\beta+1$, and define $J_n=\int_\R(1+t^2)^{-n}\,dt$. Fourier inversion gives
\begin{equation}\label{eq:single_atoms}
 \begin{gathered}
 K_1^{(q)}(x)=\frac{\Gamma(n)}{\pi}\Rea\{i^q(a-ix)^{-n}\},\\
 F_{h,c}(x)=\frac{(1-i(x-c)/(ah))^{-n}}{\sqrt{ahJ_n}},
 \qquad \psi_{h,c}=\sqrt2\,\Rea(i^qF_{h,c}).
 \end{gathered}
\end{equation}
Here and below, the superscript $q$ is suppressed for single-exponential atoms. The Fourier transform of $F_{h,c}$ is supported on the positive half-line, so $F_{h,c}$ and its complex conjugate are orthogonal in $L^2(\R)$. Both $F_{h,c}$ and $\psi_{h,c}$ have whole-line norm one, and
$\psi_{h,c}(c-x)=(-1)^q\psi_{h,c}(c+x)$.
The beta--gamma identity and the gamma-ratio expansion give, for $n\ge8$,
\begin{equation}\label{eq:Jn}
 J_n=\sqrt\pi\,\frac{\Gamma(n-1/2)}{\Gamma(n)}
 =\sqrt{\frac\pi n}\bigl(1+O(n^{-1})\bigr),
 \qquad c_0n^{-1/2}\le J_n\le C_0n^{-1/2}.
\end{equation}
Indeed, the substitution $y=t^2$ gives $J_n=B(1/2,n-1/2)$, and
$\Gamma(n-1/2)/\Gamma(n)=n^{-1/2}(1+3/(8n)+O(n^{-2}))$.

\begin{lemma}[Cross-scale estimate with endpoint-distance weights]\label{lem:onesided}
Let $n=q+\beta+1\ge8$ and $w(u)=(1+u^2)^{-1}$. If $0<h\le H/\lambda$ and $c,d\ge0$, then
\begin{equation}\label{eq:onesided}
 \abs{\int_{-\infty}^0\psi_{h,c}(x)\psi_{H,d}(x)\,dx}
 \le e_n\sqrt{h/H}\,w(c/(ah))w(d/(aH)).
\end{equation}
With $\varepsilon=\sqrt{(\lambda-1)/2}$, one may take
\begin{equation}\label{eq:en}
 e_n=C_\lambda\bigl\{n^{-1/2}+(1+\varepsilon^2)^{-(n-2)/4}\bigr\}.
\end{equation}
\end{lemma}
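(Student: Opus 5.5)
We will prove \eqref{eq:onesided} by writing the atoms through the complex functions $F_{h,c}$ of \eqref{eq:single_atoms} and bounding the one-sided integral directly. Since $\psi_{h,c}=\sqrt2\,\Rea(i^qF_{h,c})$, the product $\psi_{h,c}\psi_{H,d}$ is a combination, with unimodular coefficients, of $F_{h,c}F_{H,d}$ and $F_{h,c}\overline{F_{H,d}}$ and their conjugates. It therefore suffices to bound
\[
 \int_{-\infty}^0|F_{h,c}(x)|\,|F_{H,d}(x)|\,dx
 =\frac{1}{a\sqrt{hHJ_n^2}}\int_0^\infty\Bigl(1+\tfrac{(y+c)^2}{a^2h^2}\Bigr)^{-n/2}\Bigl(1+\tfrac{(y+d)^2}{a^2H^2}\Bigr)^{-n/2}dy,
\]
where we substituted $x=-y$. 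Taking moduli discards cancellation. This loses nothing of importance here, because the claimed bound is not oscillatory in nature: it consists of a prefactor $n^{-1/2}$ plus an exponentially small remainder.

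Substitute $y=ahu$, write $\gamma=c/(ah)$, $\delta=d/(aH)$ and $r=h/H\le1/\lambda$. The integral then equals
\[
 \frac{1}{J_n}\sqrt r\int_0^\infty(1+(u+\gamma)^2)^{-n/2}\bigl(1+(ru+\delta)^2\bigr)^{-n/2}du .
\]
The factor $\sqrt r=\sqrt{h/H}$ is exactly the claimed prefactor.

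For the weights we use the elementary fact that $(1+(u+\gamma)^2)^{-1}\le(1+\gamma^2)^{-1}$ and $(1+(u+\gamma)^2)^{-1}\le (1+u^2)^{-1}$ for $u,\gamma\ge0$. We spend one power of each factor on the weight $w(\gamma)w(\delta)$. We keep a power $(1+u^2)^{-(n-2)/2}$ of the first factor and simply bound the second factor's remaining power by $1$. This leaves
\[
 \frac{\sqrt r\,w(\gamma)w(\delta)}{J_n}\int_0^\infty(1+u^2)^{-(n-2)/2}du=\sqrt r\,w(\gamma)w(\delta)\,\frac{J_{(n-2)/2}}{2J_n}.
\]
By \eqref{eq:Jn}, $J_m\sim\sqrt{\pi/m}$, so this ratio is $O(1)$ and not $O(n^{-1/2})$. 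The crude bound is therefore too weak, and a gain of $n^{-1/2}$ must come from the interaction of the two factors. This is the main obstacle.

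To obtain the decay we do not discard the second factor. Instead we use its concentration at a different scale. Split $u\in[0,\varepsilon]$ and $u>\varepsilon$, with $\varepsilon=\sqrt{(\lambda-1)/2}$.

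On $u>\varepsilon$, the first factor satisfies $(1+(u+\gamma)^2)^{-(n-2)/2}\le(1+\varepsilon^2)^{-(n-2)/4}(1+u^2)^{-(n-2)/4}$. The remaining integral is bounded by a constant, because $n\ge8$. The normalization $J_n^{-1}\lesssim n^{1/2}$ is absorbed after splitting the exponent more carefully, namely by keeping $(1+\varepsilon^2)^{-(n-2)/4}$ and using that the leftover $(1+u^2)^{-(n-2)/4}$ integrates to $O(n^{-1/2})$. This yields the exponential term of $e_n$.

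On $u\le\varepsilon$, the Gaussian-type profile of the first factor has width $n^{-1/2}$. We will therefore compare the product with the whole-line analogue, in which the two scales are genuinely different. The bulk integral $\int(1+u^2)^{-n/2}(1+(ru+\delta)^2)^{-n/2}du$ is essentially $J_n$ times the value of the second factor near $u=0$, which is at most $(1+\delta^2)^{-n/2}$. This would give only $O(1)$ again.

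So the decay must instead come from orthogonality. Write $F_{h,c}F_{H,d}$ and $F_{h,c}\overline{F_{H,d}}$ as analytic-type functions. For $F_{h,c}\overline{F_{H,d}}$, integrate over the half-line by deforming the contour into the lower half-plane. For $F_{h,c}F_{H,d}$, whose integrand is holomorphic in $\Ima x>0$, deform into the upper half-plane, i.e.\ along $x=-y+i\eta$. The mismatch of scales by the factor $\lambda$ gives a phase drop. The factor $|1-iz/(ah)|^{-n}|1\mp iz/(aH)|^{-n}$ on the rotated ray is bounded by $(1+\varepsilon^2)^{-n/4}$ times the weights. The endpoint contribution at $x=0$ comes from one integration by parts: the boundary value $F_{h,c}(0)F_{H,d}(0)$ times $ah/n$. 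This is of size $\sqrt{h/H}\,J_n^{-1}n^{-1}w(\gamma)w(\delta)\lesssim n^{-1/2}\sqrt{h/H}\,w(\gamma)w(\delta)$, which is the first term of $e_n$.

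The key steps, in order, are therefore: (i) the reduction to complex products; (ii) one integration by parts, producing the $n^{-1/2}$ boundary term with the weights read off from $|1-ic/(ah)|^{-n}\le(1+\gamma^2)^{-1}$; (iii) bounding the remainder integral by contour rotation, using $\lambda>1$ to extract $(1+\varepsilon^2)^{-(n-2)/4}$. I expect step (iii) to be the main difficulty, specifically choosing a rotation angle uniformly in $\gamma,\delta\ge0$.
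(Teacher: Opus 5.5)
You have the reduction to complex products right, you correctly see that the modulus bound is only $O(1)$, and your tail estimate on $u>\varepsilon$ matches the paper. The gap is the near-endpoint region, which carries the whole difficulty, and there your steps (ii)--(iii) fail as stated.

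First, the remainder after one integration by parts is not exponentially small. Already for $c=d=0$, rotating onto the imaginary axis gives
$\int_{-\infty}^0F_{h,0}F_{H,0}\,dx\propto-i\,ah\int_0^\infty(1+\eta)^{-n}(1+r\eta)^{-n}\,d\eta=-i\,ah\bigl(\frac{1}{n(1+r)}+\frac{1+r^2}{n^2(1+r)^3}+O(n^{-3})\bigr)$.
The first term is the boundary term, so what remains is polynomially small, not exponentially small. The factor $(1+\varepsilon^2)^{-(n-2)/4}$ in $e_n$ comes only from the tail you already treated.

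Second, the integrand on a rotated ray is not bounded by $(1+\varepsilon^2)^{-n/4}$ times the weights. At $z=0$ it equals $(1+\gamma^2)^{-n/2}(1+\delta^2)^{-n/2}$, which is $1$ for $c=d=0$. A rotation can at best give decay like $e^{-Cns}$ away from the endpoint, i.e.\ an $O(1/n)$ integral, and that is the $n^{-1/2}$ term itself.

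Third, the lower half-plane is the wrong direction for $F_{h,c}\overline{F_{H,d}}$. On $x=-te^{i\theta}$ with $c=d=0$ and $s=t/(ah)$, the squared moduli of the two bases are $1-2s\sin\theta+s^2$ and $1+2rs\sin\theta+r^2s^2$. Their product is $1-2(1-r)s\sin\theta+O(s^2)<1$, so the integrand becomes exponentially large in $n$ along the ray. The finer-scale factor dominates, and you must rotate into the upper half-plane, where the net decay rate is proportional to $(1-r)\sin\theta$. This is exactly where $r\le\lambda^{-1}$ has to enter, and your proposal never uses it. Finally, uniformity in $\gamma,\delta$, which you leave open, is the actual content of the lemma.

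The missing ingredient is a uniform non-stationary-phase bound, which the paper obtains on the real line with no contour deformation. Work in the shifted variable $u=(y+c)/(ah)\ge u_0=\gamma$, with $v=v_0+r(u-u_0)$. The two products then have the common amplitude $A(u)=(1+u^2)^{-n/2}(1+v^2)^{-n/2}$ and phases $\Phi_\pm=\arctan u\pm\arctan v$.

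The choice $\varepsilon=\sqrt{(\lambda-1)/2}$ is made so that $1/(1+\varepsilon^2)=2/(\lambda+1)>\lambda^{-1}\ge r$. Hence on $[u_0,\varepsilon]$ one has $|\Phi'_\pm|\ge\kappa=(\lambda-1)/(\lambda(\lambda+1))$ and $|\Phi''_\pm|\le2$. You use $\varepsilon$ only as a cut point, never for this property.

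Because $A$ is decreasing, $\int_{u_0}^\varepsilon|A'|\le A(u_0)\le w(u_0)w(v_0)$. A single real integration by parts then bounds the boundary terms and the remainder together by $A(u_0)n^{-1}(3/\kappa+2\varepsilon/\kappa^2)$. After division by $J_n$ this is the $n^{-1/2}$ term. If $u_0\ge\varepsilon$ the region is absent and the tail estimate covers everything.

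The split must be made in this shifted variable. Your region $[0,\varepsilon]$ corresponds to $[\gamma,\gamma+\varepsilon]$, where $\Phi'_-$ can vanish. For example, with $\lambda=3$, $r=1/3$, $d=0$, $\gamma=\varepsilon=1$, it vanishes near $u\approx1.44$. A contour-rotation proof may well be possible, but it would need the same inputs: the correct half-plane for each product, a $\lambda$-dependent decay rate, and the restriction $\gamma<\varepsilon$. Your proposal supplies none of these.
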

\begin{proof}
Put $r=h/H\le\lambda^{-1}$, $u_0=c/(ah)$, and $v_0=d/(aH)$. On the left half-line, write $x=-y$ and $u=(y+c)/(ah)$, so that $v=v_0+r(u-u_0)$. Expanding the product of real parts gives two oscillatory integrals with amplitude and phases
\[
 A(u)=(1+u^2)^{-n/2}(1+v^2)^{-n/2},
 \qquad \Phi_\pm(u)=\arctan u\pm\arctan v.
\]
The absolute value of the inner product is at most $2\sqrt r/J_n$ times the larger absolute value of these integrals. If $u_0<\varepsilon$, then on $[u_0,\varepsilon]$,
\[
 |\Phi'_\pm|\ge\kappa:=\frac1{1+\varepsilon^2}-\lambda^{-1}
 =\frac{\lambda-1}{\lambda(\lambda+1)}>0,
 \qquad |\Phi''_\pm|\le2.
\]
Since $u,v\ge0$, the amplitude is decreasing and $\int_{u_0}^{\varepsilon}|A'|\le A(u_0)$. Integration by parts yields
\begin{equation}\label{eq:ibp}
 \abs{\int_{u_0}^{\varepsilon}A(u)e^{in\Phi_\pm(u)}\,du}
 \le\frac{A(u_0)}n\left(\frac3\kappa+\frac{2\varepsilon}{\kappa^2}\right).
\end{equation}
Using $A(u_0)\le w(u_0)w(v_0)$ and \eqref{eq:Jn} gives the $n^{-1/2}$ term in \eqref{eq:en}. This part of the integral is absent when $u_0\ge\varepsilon$.

For the remaining integral, let $U=\max(u_0,\varepsilon)$. Extract one endpoint weight from each factor and split the remaining power of $1+u^2$ into two equal parts. Then
\[
 \begin{split}
 \int_U^\infty A(u)\,du
 &\le w(u_0)w(v_0)(1+\varepsilon^2)^{-(n-2)/4}
       \int_0^\infty(1+u^2)^{-(n-2)/4}\,du\\
 &\le Cn^{-1/2}(1+\varepsilon^2)^{-(n-2)/4}w(u_0)w(v_0).
 \end{split}
\]
Division by $J_n$ completes the estimate, uniformly in both endpoint offsets.
\end{proof}

\subsection{Centre counting and interval Gram bounds}
For atoms at the same scale, assume $|c-d|\ge\Delta h$ and $n\ge8$. Then
\begin{equation}\label{eq:samescale}
 \abs{\int_{-\infty}^0\psi_{h,c}\psi_{h,d}}
 \le b_nw(c/(ah))w(d/(ah)),
 \qquad b_n=C\bigl(1+(\Delta/a)^2\bigr)^{-(n-2)/4}.
\end{equation}
To see this, order the centres so that the larger normalized endpoint distance, say $v_0$, is at least $\Delta/a$. Extract the two endpoint weights and half the remaining power of the factor involving $v$. The integral of the remaining factor involving $u$ is $O(n^{-1/2})$, which cancels the normalization in \eqref{eq:Jn}. No oscillatory cancellation is needed at a fixed scale.

Within-scale separation and comparison with a decreasing integral give
\begin{equation}\label{eq:weights_count}
 \sum_{c\in X_\ell}w(c/(ah_\ell)),\quad
 \sum_{c\in X_\ell}w((1-c)/(ah_\ell))
 \le P_{a,\Delta}:=2+\frac{\pi a}{\Delta}.
\end{equation}
Let $E_-$ be the Gram matrix obtained by integration over $(-\infty,0)$. Since $\sum_{k\ge1}\lambda^{-k/2}=(\sqrt\lambda-1)^{-1}$, each off-diagonal row sum of $E_-$ is at most
\begin{equation}\label{eq:rho}
 \rho_n=P_{a,\Delta}\left(b_n+\frac{2e_n}{\sqrt\lambda-1}\right)\longrightarrow0.
\end{equation}
Reflection about $1/2$ gives the same bound for the right exterior Gram matrix $E_+$.

The modulus of a whole-line complex Gram entry is $g_{h,c;H,d}^{\,2n-1}$, where
\[
 g_{h,c;H,d}=\frac{2\sqrt{hH}}{\sqrt{(h+H)^2+((c-d)/a)^2}}.
\]
For distinct atoms, $g_{h,c;H,d}\le\theta<1$, with
\[
 \theta=\max\left\{\bigl(1+(\Delta/(2a))^2\bigr)^{-1/2},
                  \frac{2\sqrt\lambda}{1+\lambda}\right\}.
\]
For the centre set $X_H$ at scale $H$, a separated sum of the unimodal function $(1+t^2)^{-3/2}$ is bounded by two maximal terms and its integral divided by the centre spacing. Hence
\begin{equation}\label{eq:g_count}
 \sum_{d\in X_H}g_{h,c;H,d}^{\,3}
 \le\left(\frac{2\sqrt{hH}}{h+H}\right)^3
       \left(2+\frac{2a(h+H)}{\Delta H}\right).
\end{equation}
The right-hand side is $O_{a,\Delta}(R^{-3/2})$ when $R=H/h\ge1$ and $O_{a,\Delta}(R^{-1/2})$ when $R=h/H\ge1$. Since $g^{2n-1}\le\theta^{2n-4}g^3$ off the diagonal, geometric summation in both scale directions bounds the whole-line off-diagonal row sums by
\begin{equation}\label{eq:alpha}
 \alpha_n=C_{a,\lambda,\Delta}\theta^{2n-4}\longrightarrow0.
\end{equation}
The real Gram entries are the real parts of the complex entries and satisfy the same bound.

Every $c\in[0,1]$ is at distance at least $1/2$ from one endpoint. Evenness of the squared atom and the bound $h\le1$ therefore imply
\begin{equation}\label{eq:diagonal}
 \frac12-\tau_n\le D_{h,c}:=\int_0^1\psi_{h,c}^2\le1,
 \qquad \tau_n=\frac2{J_n}\int_{1/(2a)}^\infty(1+t^2)^{-n}\,dt.
\end{equation}
The same power splitting as in Lemma~\ref{lem:onesided} shows that $\tau_n$ decays exponentially. Using $G_I=G_\R-E_- -E_+$, the interval off-diagonal row sums are at most $\alpha_n+2\rho_n$. Applying $2|b_ib_j|\le b_i^2+b_j^2$ to the quadratic form gives
\begin{equation}\label{eq:single_gram}
 \left(\frac12-\delta_n\right)\norm{b}_{\ell^2}^2
 \le\norm{U_qb}_{L^2(I)}^2
 \le(1+\alpha_n)\norm{b}_{\ell^2}^2,
 \qquad \delta_n=\tau_n+\alpha_n+2\rho_n,
\end{equation}
where $U_q$ is the synthesis operator for the real single-exponential atoms. The upper bound follows by restriction from the whole line. In particular,
$\delta_n=O_{a,\lambda,\Delta}(n^{-1/2})$.
The weights in \eqref{eq:weights_count} remove the dependence on the number of centres, while geometric scale separation makes the cross-scale sum finite. This is the step that turns individual energy estimates into a lower bound for arbitrary signed combinations.

\subsection{Positive mixtures in synthesis-operator norm}
For $A\ge1$, let $\widehat K_A(\xi)=|\xi|^\beta e^{-aA|\xi|}$, and let $U_{q,A}$ synthesize normalized derivatives of $K_1$ at scales $Ah_\ell$ and centres $c\in X_\ell$. Thus $U_{q,1}=U_q$. Although dilation reduces the relative centre separation, the whole-line Bessel bound has explicit dependence on $A$.

Indeed, replace $a$ by $aA$ in \eqref{eq:g_count}. At the same scale, excluding the row centre gives
\[
 2\sum_{k\ge1}\left(1+\left(\frac{k\Delta}{2aA}\right)^2\right)^{-3/2}
 \le\frac{4aA}{\Delta}.
\]
For a different scale, write $g_A$ for the resulting Gram factor. For $R\ge1$, \eqref{eq:g_count} gives
\[
 \sum_{d\in X_H}g_A^{\,3}
 \le
 \begin{cases}
 16(1+aA/\Delta)R^{-3/2},&H=Rh,\\[2pt]
 16R^{-3/2}+16(aA/\Delta)R^{-1/2},&h=RH.
 \end{cases}
\]
Summing over the two scale directions, and using $g_A^{2n-1}\le g_A^3$ for $q+\beta\ge1$, yields
\begin{equation}\label{eq:bessel_A}
 \norm{U_{q,A}}_{\ell^2\to L^2(\R)}^2
 \le1+\frac{4aA}{\Delta}
       +\frac{32}{\lambda^{3/2}-1}
       +\frac{32aA}{\Delta(\sqrt\lambda-1)}
 \le C_{\mathrm B}^2A.
\end{equation}
This is a whole-line estimate; it does not require $Ah_\ell\le1$. Its constant is independent of the number of scales and centres.

Set $m=q+\beta$ and $\nu_q=\norm{K_1^{(q)}}_{L^2(\R)}$. Combine all minimal exponential terms into $W=\sum_{A_j=1}w_j\in(0,\infty)$. Fourier scaling gives
$K_A^{(q)}(x)=A^{-q-\beta-1}K_1^{(q)}(x/A)$.
Normalizing temporarily by $\nu_q$, the mixture synthesis operator is
\[
 \widetilde T_q=\sum_jw_jA_j^{-m-1/2}U_{q,A_j}
 =WU_{q,1}+E_q,
 \qquad \norm{E_q}\le C_{\mathrm B}\epsilon_q,
 \qquad \epsilon_q=\sum_{A_j>1}w_jA_j^{-m}.
\]
For $m\ge s$, the series converges absolutely in operator norm by \eqref{eq:bessel_A}. Dominated convergence gives $\epsilon_q\to0$. No gap between $1$ and the remaining $A_j$ is required; without such a gap, no rate of convergence is asserted for $\epsilon_q$.

Applying the same expansion to one atom gives
\[
 r_q=\frac{\norm{K^{(q)}}_{L^2(\R)}}{\nu_q},
 \qquad |r_q-W|\le\epsilon_q.
\]
The normalized synthesis operator is $T_q=\widetilde T_q/r_q$. Whenever $\epsilon_q<W$,
\begin{equation}\label{eq:operator_error}
 d_q:=\norm{T_q-U_{q,1}}_{\ell^2\to L^2(\R)}
 \le\frac{2C_{\mathrm B}\epsilon_q}{W-\epsilon_q}\longrightarrow0.
\end{equation}
Restriction to $I$ is contractive. Thus, if $\delta_n<1/2$ and $d_q<\sqrt{1/2-\delta_n}$, the triangle inequality and \eqref{eq:single_gram} imply
\begin{equation}\label{eq:mixture_gram}
 \left(\sqrt{\frac12-\delta_n}-d_q\right)^2\norm{b}_{\ell^2}^2
 \le\norm{T_qb}_{L^2(I)}^2
 \le\left(\sqrt{1+\alpha_n}+d_q\right)^2\norm{b}_{\ell^2}^2.
\end{equation}
The squares are essential: the perturbation is measured in operator norm, whereas the coefficient bounds concern squared norms.

Given $0<\zeta<1/2$, choose $q_0$ from $K,\lambda,\Delta,\zeta$ alone so that, for every $q\ge q_0$,
\begin{equation}\label{eq:q0_conditions}
 m\ge s,\quad n\ge8,\quad \epsilon_q\le W/2,\quad
 \delta_n\le\zeta/4,\quad \alpha_n\le\zeta/4,\quad
 \frac{2C_{\mathrm B}\epsilon_q}{W-\epsilon_q}\le\zeta/8.
\end{equation}
Then the lower constant in \eqref{eq:mixture_gram} is at least $1/2-\zeta/4-\zeta/4\ge1/2-\zeta$, while the upper constant is at most $1+\zeta/4+3\zeta/8+\zeta^2/64<1+\zeta$. This proves Theorem~\ref{thm:gram}.

\subsection{Lowering the derivative order}
\begin{lemma}[Interval interpolation with a parameter]\label{lem:interpolation}
For each integer $m\ge1$, there is $A_m\ge1$ such that, for every $v\in H^{m+1}(0,1)$, integer $N\ge1$, and $0\le k\le m$,
\begin{equation}\label{eq:interpolation}
 \norm{v^{(k)}}_2
 \le A_m\bigl(N^k\norm{v}_2+N^{k-m-1}\norm{v^{(m+1)}}_2\bigr).
\end{equation}
\end{lemma}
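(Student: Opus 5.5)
We prove the estimate by splitting $(0,1)$ into $N$ congruent subintervals $I_r=((r-1)/N,r/N)$, $1\le r\le N$, applying an unparametrized interpolation inequality on each, and summing. The $N$-dependence then comes entirely from rescaling.

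The base case is $N=1$. We need a constant $A_m'$ with
\[
 \norm{v^{(k)}}_{L^2(0,1)}\le A_m'\bigl(\norm{v}_{L^2(0,1)}+\norm{v^{(m+1)}}_{L^2(0,1)}\bigr),\qquad 0\le k\le m .
\]
Let $p$ be the $L^2(0,1)$-orthogonal projection of $v$ onto polynomials of degree at most $m$. Taylor's formula with integral remainder, together with the Cauchy--Schwarz inequality, gives $\norm{(v-p)^{(k)}}_{L^2(0,1)}\le C_m\norm{v^{(m+1)}}_{L^2(0,1)}$ for $k\le m$. More precisely, one first subtracts the Taylor polynomial $T$ at $0$, for which $\norm{(v-T)^{(k)}}\le C\norm{v^{(m+1)}}$. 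Then $p-T$ is the projection of $v-T$ onto the same polynomial space, so its norm is at most $\norm{v-T}$. Since all norms on the finite-dimensional space of polynomials of degree at most $m$ are equivalent, $\norm{p^{(k)}}\le C_m\norm{p}\le C_m\norm{v}$. Adding the two bounds gives the base inequality. Alternatively, one may simply quote the standard Gagliardo--Nirenberg/Ehrling inequality for $H^{m+1}(0,1)$.

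Next we rescale. For an interval $J$ of length $N^{-1}$, set $\tilde v(t)=v(x_J+t/N)$ for $t\in(0,1)$, where $x_J$ is the left endpoint of $J$. Then
\[
 \norm{\tilde v^{(j)}}_{L^2(0,1)}=N^{1/2-j}\norm{v^{(j)}}_{L^2(J)} .
\]
Applying the base inequality to $\tilde v$ and multiplying through by $N^{k-1/2}$ yields
\[
 \norm{v^{(k)}}_{L^2(J)}\le A_m'\bigl(N^k\norm{v}_{L^2(J)}+N^{k-m-1}\norm{v^{(m+1)}}_{L^2(J)}\bigr).
\]

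Finally we sum. Squaring the local inequality and using $(x+y)^2\le2x^2+2y^2$ gives
\[
 \norm{v^{(k)}}_{L^2(I_r)}^2\le 2A_m'^2\bigl(N^{2k}\norm{v}_{L^2(I_r)}^2+N^{2(k-m-1)}\norm{v^{(m+1)}}_{L^2(I_r)}^2\bigr).
\]
Summing over $r=1,\dots,N$ and using $\sqrt{x^2+y^2}\le x+y$ gives \eqref{eq:interpolation} with $A_m=\sqrt2\,A_m'$. Enlarging $A_m$ if necessary gives $A_m\ge1$. The resulting constant depends only on $m$.

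The only step with genuine content is the base case, specifically controlling the polynomial part by $\norm{v}$ alone. That is exactly what finite-dimensional norm equivalence provides, so no real obstacle remains. Because every subinterval has the same length, the rescaled constant is uniform in $r$ and in $N$.
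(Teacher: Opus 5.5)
Your proof is correct and follows the paper's route: the Taylor remainder plus norm equivalence on polynomials of degree at most $m$ give the case $N=1$, and rescaling to the $N$ equal subintervals and summing the squared local bounds give general $N$. The detour through the orthogonal projection is harmless but unnecessary, since the paper simply bounds $\norm{v-R}_2\le\norm{v}_2+\norm{R}_2$ for the Taylor polynomial $v-R$; likewise, the factor $\sqrt2$ you lose in the summation could be avoided with Minkowski's inequality in $\ell^2$.
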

\begin{proof}
First take $N=1$. Let
$R(x)=\int_0^x(x-t)^mv^{(m+1)}(t)\,dt/m!$.
Then $P=v-R$ is a polynomial of degree at most $m$, and
$\norm{R^{(k)}}_2\le C_m\norm{v^{(m+1)}}_2$.
Equivalence of norms on the polynomial space gives
$\norm{P^{(k)}}_2\le B_m\norm{P}_2$, and the triangle inequality proves the claim for $N=1$. For general $N$, apply this estimate on each of the $N$ equal subintervals, rescale, and combine the local bounds using the $\ell^2$ triangle inequality.
\end{proof}

\begin{proposition}[Lowering an adjacent-derivative estimate]\label{prop:lowering}
Let $m\ge1$ be an integer, $f\in H^{m+1}(0,1)$, $0<h\le1$, and $M\ge0$. If
$\norm{f^{(m+1)}}_2\le Mh^{-1}\norm{f^{(m)}}_2$, then
\[
 \norm{f^{(k)}}_2\le C_{m,M}h^{-k}\norm{f}_2
 \qquad(1\le k\le m+1).
\]
\end{proposition}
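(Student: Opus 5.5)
Choose an integer $N$ comparable to $h^{-1}$, say $N=\lceil \eta^{-1}h^{-1}\rceil$ with a small parameter $\eta\in(0,1]$ depending only on $m$ and $M$. Then $N\ge1$ and $h^{-1}\le N\le 2\eta^{-1}h^{-1}$ since $h\le1$. The plan has three steps: bound $\norm{f^{(m)}}_2$ by $\norm{f}_2$, then recover the intermediate derivatives from Lemma~\ref{lem:interpolation}, and finally handle $k=m+1$ from the hypothesis.

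\textbf{Step 1.} Apply Lemma~\ref{lem:interpolation} with $v=f$ and $k=m$:
\[
 \norm{f^{(m)}}_2\le A_m\bigl(N^m\norm{f}_2+N^{-1}\norm{f^{(m+1)}}_2\bigr)
 \le A_mN^m\norm{f}_2+A_mMN^{-1}h^{-1}\norm{f^{(m)}}_2 .
\]
Since $N^{-1}h^{-1}\le\eta$, choosing $\eta=\min\{1,(2A_mM)^{-1}\}$ (or $\eta=1$ if $M=0$) makes the last coefficient at most $1/2$. Because $f\in H^{m+1}$, $\norm{f^{(m)}}_2$ is finite and can be absorbed into the left-hand side. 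This gives
\[
 \norm{f^{(m)}}_2\le 2A_mN^m\norm{f}_2\le 2A_m(2/\eta)^mh^{-m}\norm{f}_2 .
\]

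\textbf{Step 2.} For $1\le k\le m-1$, apply Lemma~\ref{lem:interpolation} again with the same $N$. Insert the hypothesis $\norm{f^{(m+1)}}_2\le Mh^{-1}\norm{f^{(m)}}_2$ together with Step 1:
\[
 \norm{f^{(k)}}_2\le A_m\bigl(N^k\norm{f}_2+N^{k-m-1}Mh^{-1}\cdot 2A_mN^m\norm{f}_2\bigr)
 =A_m\bigl(1+2A_mMN^{-1}h^{-1}\bigr)N^k\norm{f}_2 .
\]
Using $N^{-1}h^{-1}\le1$ and $N^k\le(2/\eta)^kh^{-k}$ gives $\norm{f^{(k)}}_2\le C_{m,M}h^{-k}\norm{f}_2$.

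\textbf{Step 3.} For $k=m+1$, combine the hypothesis with Step 1 to get $\norm{f^{(m+1)}}_2\le 2MA_m(2/\eta)^m h^{-m-1}\norm{f}_2$.

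All constants depend only on $m$ and $M$.

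The only delicate point is the absorption in Step 1. It is exactly here that $N$ must be tied to $h^{-1}$ with a large factor $\eta^{-1}$, so that the $N^{-1}$ gain in the interpolation inequality beats the factor $Mh^{-1}$ coming from the hypothesis. The finiteness of $\norm{f^{(m)}}_2$, which the absorption needs, is guaranteed by $f\in H^{m+1}(0,1)$.
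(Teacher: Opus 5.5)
Your proof is correct and follows the paper's argument. You take $N\sim h^{-1}$ with a factor depending on $A_mM$, so that at $k=m$ the term $N^{-1}\norm{f^{(m+1)}}_2$ is absorbed into the left-hand side. The hypothesis then gives $k=m+1$, and Lemma~\ref{lem:interpolation} gives $k<m$. The only difference is cosmetic: the paper switches to $N=\lceil 1/h\rceil$ for the intermediate orders, while you reuse the same $N$, and this changes nothing.
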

\begin{proof}
In \eqref{eq:interpolation}, take $k=m$ and $N=\lceil(1+2A_mM)/h\rceil$. The last term is at most $\norm{f^{(m)}}_2/2$ and can be absorbed. Consequently,
$\norm{f^{(m)}}_2\le Dh^{-m}\norm{f}_2$ with $D=2A_m(2+2A_mM)^m$.
The hypothesis gives the bound for $k=m+1$. For $k<m$, take $N=\lceil1/h\rceil$ in \eqref{eq:interpolation} to obtain
$\norm{f^{(k)}}_2\le A_m(2^k+MD)h^{-k}\norm{f}_2$.
No boundary conditions are imposed; polynomial components whose high derivatives vanish are included.
\end{proof}

\begin{proof}[Proof of Theorem~\ref{thm:inverse}]
Choose $\zeta=1/4$ and fix $q\ge\max\{1,q_0(K,\lambda,\Delta,1/4)\}$ independently of the configuration. Apply Theorem~\ref{thm:gram} at orders $q$ and $q+1$, with coefficient bounds $A=1/4$ and $B=5/4$. For
$f=a_0+a_1x+\sum_{\ell,c}d_{\ell c}\Sigma((x-c)/h_\ell)$, this gives
\begin{equation}\label{eq:adjacent}
 \begin{split}
 \norm{f^{(q+1)}}_2^2
 &\ge AN_q\sum_{\ell,c}d_{\ell c}^2h_\ell^{-2q-1},\\
 \norm{f^{(q+2)}}_2^2
 &\le BN_{q+1}\sum_{\ell,c}d_{\ell c}^2h_\ell^{-2q-3}
 \le\frac{BN_{q+1}}{AN_q}h_*^{-2}\norm{f^{(q+1)}}_2^2.
 \end{split}
\end{equation}
The generators are smooth, so Proposition~\ref{prop:lowering} applies with $m=q+1$ and gives \eqref{eq:inverse}. For any fixed $k$, increase the fixed order $q$ if necessary so that $q+2\ge k$.
Finally, if $f=0$ on $I$, the first inequality in \eqref{eq:adjacent} forces all $d_{\ell c}$ to vanish. Independence of $1$ and $x$ then gives independence of all the generators.
\end{proof}
The proof provides sufficient constants, but the required derivative order and the subsequent interpolation may make them large. No claim is made that these constants are close to optimal.

\section{Sharpness in scale and derivative order}\label{sec:examples}

\subsection{Scale sharpness for each fixed kernel}
\begin{theorem}[Matching scale lower bound]\label{thm:sharp_scale}
Fix a kernel satisfying Definition~\ref{def:kernel} and parameters $\lambda>1$, $\Delta>0$. There are single-scale, two-centre spaces $V_h$ satisfying \eqref{eq:geometry} such that, as $h\downarrow0$,
\begin{equation}\label{eq:sharp_scale}
 c_{K,\Delta}h^{-1}\le L(V_h)\le C_{K,\lambda,\Delta}h^{-1},
 \qquad \dim V_h=4.
\end{equation}
Thus the exponent of $h_*^{-1}$ is optimal over admissible configurations for every fixed kernel in the class.
\end{theorem}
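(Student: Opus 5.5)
The upper bound in \eqref{eq:sharp_scale} is immediate. Any single-scale configuration with $L=1$, $h_1=h\le1$ and two centres $c,d\in X_1$ with $|c-d|\ge\Delta h$ satisfies \eqref{eq:geometry}, so Theorem~\ref{thm:inverse} gives $L(V_h)\le C_{K,\lambda,\Delta}h^{-1}$. The dimension count $\dim V_h=2+2=4$ is the linear-independence statement of the same theorem. The content is therefore the lower bound. For it I will exhibit a single $f_h\in V_h$ whose derivative is large relative to $f_h$ itself.

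The plan is to choose the two centres symmetrically inside $I$, namely $c=1/2$ and $d=1/2+\Delta h$ (admissible for small $h$), and take the difference
\[
f_h(x)=\Sigma\!\left(\frac{x-c}{h}\right)-\Sigma\!\left(\frac{x-d}{h}\right).
\]
The point of the difference is that the two transition functions agree at $\pm\infty$, so $f_h$ is localized. Write $g(t)=\Sigma(t)-\Sigma(t-\Delta)$. Then $g$ is smooth, $g'(t)=K(t)-K(t-\Delta)$, and $g$ is not identically zero, since otherwise $\widehat K(\xi)(1-e^{-i\Delta\xi})=0$ would force $\widehat K=0$ a.e. The function $g$ is also in $L^2(\R)$, because $g=\int_{t-\Delta}^{t}K$ and hence $\|g\|_{L^2(\R)}\le\Delta\|K\|_{L^2(\R)}$ by Minkowski's inequality.

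After substituting $x=c+ht$,
\[
\|f_h\|_{L^2(I)}^2=h\int_{-1/(2h)}^{1/(2h)}|g|^2\,dt,
\qquad
\|f_h'\|_{L^2(I)}^2=h^{-1}\int_{-1/(2h)}^{1/(2h)}|g'|^2\,dt.
\]
As $h\downarrow0$, the first integral increases to $\|g\|_{L^2(\R)}^2$ and the second to $\|g'\|_{L^2(\R)}^2$. The latter is positive because $g'=0$ would make $g$ a constant, which must be $0$ since $g\in L^2$. Hence for $h$ small enough,
\[
\frac{\|f_h'\|_{L^2(I)}}{\|f_h\|_{L^2(I)}}\ \ge\ \frac{\|g'\|_{L^2(\R)}}{2\|g\|_{L^2(\R)}}\,h^{-1}=:c_{K,\Delta}h^{-1},
\]
with $c_{K,\Delta}>0$ depending only on $K$ and $\Delta$. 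The separation parameter $\lambda$ plays no role, because a single scale places no constraint on it.

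The main obstacle is confirming that $g\in L^2(\R)$ with the relevant norms finite and nonzero; the Minkowski bound and the Fourier argument above handle this. Note that $\Sigma$ alone need not be localized: for the Cauchy kernel it tends to distinct constants at $\pm\infty$, which is why the difference of two centres is used and why two centres (hence $\dim V_h=4$) are needed. If one prefers explicit $h$-independence over monotone convergence, $h\le1/(4\Delta)$ gives a concrete threshold, and smaller $h$ only increase the truncated integrals.
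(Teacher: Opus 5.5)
Your proposal is correct and follows essentially the same route as the paper: a single-scale difference of two translated primitives rescales to a fixed $L^2(\R)$ profile whose derivative has positive norm, and the upper bound and dimension are taken from Theorem~\ref{thm:inverse}. The differences are cosmetic: you place the centres non-symmetrically, use Minkowski with $\norm{K}_{L^2(\R)}$ instead of Young with $\norm{K}_{L^1(\R)}$, and argue nondegeneracy via the Fourier transform rather than periodicity.
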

\begin{proof}
Take $d=\Delta$ and $b=1/2$, and set
\[
 Q_d(t)=\Sigma(t+d/2)-\Sigma(t-d/2)
       =\int_{-d/2}^{d/2}K(t-y)\,dy.
\]
Young's inequality gives $\norm{Q_d}_{L^2(\R)}\le\sqrt d\,\norm{K}_{L^1(\R)}<\infty$, and
$Q'_d=K(\cdot+d/2)-K(\cdot-d/2)\in L^2(\R)$.
The kernel is nonzero because its Fourier transform is positive away from zero. If $Q'_d=0$, then the continuous integrable function $K$ is $d$-periodic and hence zero, a contradiction. Both $\norm{Q_d}_{L^2(\R)}$ and $\norm{Q'_d}_{L^2(\R)}$ are therefore positive.

For sufficiently small $h$, the centres $b\pm dh/2$ lie in $I$ and have the required separation. The resulting space contains $p_h(x)=Q_d((x-b)/h)$. With $I_h=(-b/h,(1-b)/h)$, a change of variables gives
\begin{equation}\label{eq:difference_limit}
 h\frac{\norm{p'_h}_{L^2(I)}}{\norm{p_h}_{L^2(I)}}
 =\left(\frac{\int_{I_h}|Q'_d(t)|^2\,dt}{\int_{I_h}|Q_d(t)|^2\,dt}\right)^{1/2}
 \longrightarrow\frac{\norm{Q'_d}_{L^2(\R)}}{\norm{Q_d}_{L^2(\R)}}>0.
\end{equation}
This proves the lower bound. The upper bound and dimension follow from Theorem~\ref{thm:inverse}; scale separation is automatic at one scale. The centres remain in the interior as $h\downarrow0$, so this scale obstruction does not rely on endpoint effects.
\end{proof}
The theorem gives an attaining family for each fixed kernel, not a lower bound for every configuration or a constant uniform over the kernel class. It also shows that the inverse constant can grow without any increase in dimension.

\begin{figure}[tbp]
\centering
\includegraphics[width=\linewidth]{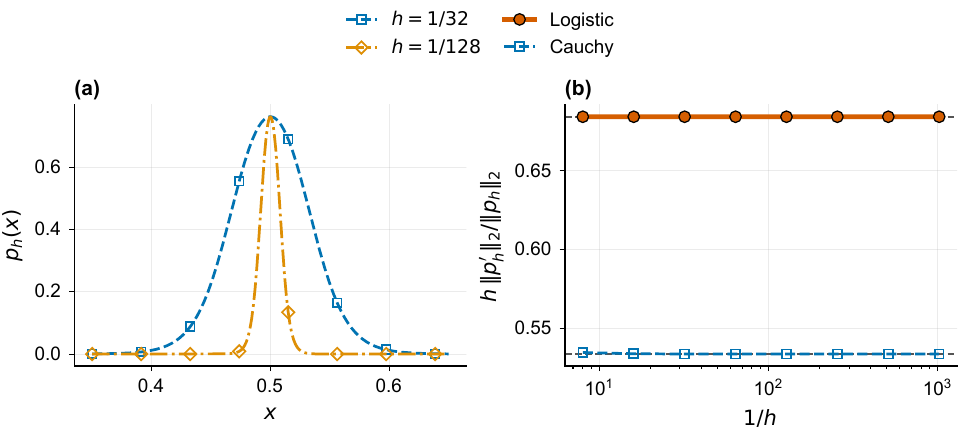}
\caption{Two-centre differences attaining the scale exponent. Left: logistic differences at $h=1/32$ and $h=1/128$. Right: $h\norm{p'_h}_{L^2(I)}/\norm{p_h}_{L^2(I)}$ for logistic and Cauchy differences. The parameters are $\gamma=2$ for logistic, $a=1$ for Cauchy, centre spacing $dh$ with $d=2$, and midpoint $b=1/2$. Dashed lines show the positive whole-line limits in \eqref{eq:difference_limit}. These functions witness the lower bound; they are not numerical maximizers over the full space.}\label{fig:local}
\end{figure}

For the prescribed logistic space \eqref{eq:prescribed}, an explicit lower bound follows from
\[
 Q_\gamma(t)=\sigma(\gamma(t+1))-\sigma(\gamma(t-1))
 =\frac{\sinh\gamma}{\cosh\gamma+\cosh(\gamma t)}.
\]
Since $0<Q_\gamma<1$ and $\int_\R Q_\gamma=2$, we have $\int_\R Q_\gamma^2\le2$. Set $A_\gamma=\int_{-1}^1|Q'_\gamma|^2>0$. For $J\ge2$, both $1/2\pm h_*$ are centres at the finest scale. Hence $p_J(x)=Q_\gamma((x-1/2)/h_*)\in V_J$ and
\[
 \norm{p_J}_2^2\le2h_*,\qquad \norm{p'_J}_2^2\ge A_\gamma/h_*.
\]
For $J=1$, the affine function $x-1/2$ supplies a lower bound. Together with Theorem~\ref{thm:inverse}, this proves the following.
\begin{corollary}[Dimension order for the prescribed logistic space]\label{cor:dimension}
For fixed $\gamma>0$,
$L(V_J)\asymp_\gamma2^J\asymp_\gamma\dim V_J$, where $\dim V_J=2^J+1$.
\end{corollary}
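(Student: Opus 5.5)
The corollary follows by combining an upper bound from Theorem~\ref{thm:inverse} with the explicit lower bound constructed just above, and then comparing with the dimension count.

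\emph{Upper bound.} The configuration in \eqref{eq:prescribed} satisfies \eqref{eq:geometry} with $\lambda=\Delta=2$.
\begin{itemize}
\item The scales are $h_\ell=2^{-\ell}\le1/2$ and satisfy $h_{\ell+1}=h_\ell/2$.
\item The centres $c_{\ell r}=(2r-1)2^{-\ell}$, $1\le r\le 2^{\ell-1}$, lie in $(0,1)$.
\item Within each scale the centres are spaced exactly $2h_\ell$ apart.
\end{itemize}
The generators are $\sigma(\gamma(x-c)/h_\ell)=\Sigma((x-c)/h_\ell)+1/2$, where $\Sigma$ is the primitive of the logistic derivative $K_\gamma$ from Example~\ref{ex:logistic}. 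The constant $1/2$ is absorbed by the affine generator $1$, so $V_J$ coincides with the space \eqref{eq:space}. Example~\ref{ex:logistic} shows that $K_\gamma$ satisfies Definition~\ref{def:kernel}. Theorem~\ref{thm:inverse} then gives
\[
L(V_J)\le C(K_\gamma,2,2)\,h_*^{-1}=C_\gamma 2^J,
\]
with a constant independent of $J$.

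\emph{Dimension.} The last assertion of Theorem~\ref{thm:inverse} gives
\[
\dim V_J=2+\sum_{\ell=1}^J2^{\ell-1}=2+(2^J-1)=2^J+1.
\]
Hence $\dim V_J\asymp 2^J$ with absolute constants.

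\emph{Lower bound for $J\ge2$.} The centres $1/2\pm h_*$ both belong to the finest level. Indeed, $h_*=2^{-J}$, and $1/2\pm2^{-J}=(2r-1)2^{-J}$ for $r=2^{J-2}$ and $r=2^{J-2}+1$; both values lie in the range $1\le r\le2^{J-1}$ when $J\ge2$. Therefore $p_J(x)=Q_\gamma((x-1/2)/h_*)\in V_J$.

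For the denominator, substitute $t=(x-1/2)/h_*$ and use $Q_\gamma^2\le Q_\gamma$ together with $\int_{\R} Q_\gamma=2$. This gives $\norm{p_J}_2^2\le2h_*$.

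For the numerator, the same substitution maps $I$ onto $(-2^{J-1},2^{J-1})$, which contains $(-1,1)$ when $J\ge1$. Hence $\norm{p_J'}_2^2\ge h_*^{-1}A_\gamma$. Here $A_\gamma>0$ because $Q_\gamma$ is nonconstant on $(-1,1)$: it is even and strictly decreasing in $|t|$.

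Combining the two estimates,
\[
L(V_J)\ge \frac{\norm{p_J'}_2}{\norm{p_J}_2}\ge \sqrt{A_\gamma/2}\;h_*^{-1}=\sqrt{A_\gamma/2}\;2^J .
\]

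\emph{Lower bound for $J=1$.} The affine function $x-1/2\in V_1$ gives $L(V_1)\ge\sqrt{12}$. This is comparable to $2^1$, so it fixes the constant in the single remaining case.

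Taking the minimum of the two lower constants yields $L(V_J)\asymp_\gamma 2^J$. Combined with $\dim V_J=2^J+1$, this proves the corollary.

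The only step needing genuine care is confirming that $V_J$ fits the framework of Theorem~\ref{thm:inverse}, namely that the admissibility conditions \eqref{eq:geometry} hold and the constant shift in the generators is absorbed. The other steps are routine verifications.
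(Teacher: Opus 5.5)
Your proposal is correct and follows essentially the same route as the paper. The upper bound comes from Theorem~\ref{thm:inverse} with $\lambda=\Delta=2$, and the dimension count from the linear independence of the generators. The lower bound uses the two-centre difference $p_J(x)=Q_\gamma((x-1/2)/h_*)$ for $J\ge2$, with $\norm{p_J}_2^2\le2h_*$ and $\norm{p_J'}_2^2\ge A_\gamma/h_*$, and the affine function $x-1/2$ for $J=1$.
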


\subsection{Sharp one-sided cross-scale decay}
If both centres are exactly at an endpoint, the product of the two real atoms is even. Its one-sided integral is half the whole-line inner product and may decay exponentially. To test the uniform rate in Lemma~\ref{lem:onesided}, we instead let one centre depend on the derivative order.

\begin{proposition}[Moving centres attaining the one-sided rate]\label{prop:moving}
For a single exponential spectrum, fix $a>0$ and a nonnegative integer $\beta$. Let $0<r\le\lambda^{-1}$, $H=1/2$, $h=rH$, and
$c_n=ah\pi/(2n)$, where $n=q+\beta+1$. Then, as $q\to\infty$,
\begin{equation}\label{eq:moving_asymptotic}
 \int_{-\infty}^0\psi_{h,c_n}(x)\psi_{H,0}(x)\,dx
 =-\frac{\sqrt r}{\sqrt{\pi n}}
   \left(\frac1{1-r}+\frac{(-1)^q}{1+r}\right)+O_r(n^{-1}).
\end{equation}
Consequently, the $n^{-1/2}$ term in the one-sided estimate cannot be replaced by $o(n^{-1/2})$ uniformly over admissible configurations.
\end{proposition}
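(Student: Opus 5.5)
The plan is a direct stationary-phase-free endpoint expansion: the integrand oscillates at frequency of order $n$ while its amplitude varies only on the scale $n^{-1/2}$, so a first integration by parts at the left endpoint should produce the leading term.

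\textbf{Setup.} Write $\psi_{h,c}\psi_{H,0}=2\Rea(i^qF_{h,c})\Rea(i^qF_{H,0})=\Rea\{(-1)^qF_{h,c}F_{H,0}\}+\Rea\{F_{h,c}\overline{F_{H,0}}\}$ using \eqref{eq:single_atoms}. On $(-\infty,0)$ substitute $x=-y$ and then $u=(y+c_n)/(ah)$, exactly as in the proof of Lemma~\ref{lem:onesided}. Then $u_0=c_n/(ah)=\pi/(2n)$ and $v=y/(aH)=r(u-u_0)$. The Jacobian $ah$ combines with the normalizations to give the prefactor $\sqrt r/J_n$, which equals $\sqrt{rn/\pi}\,(1+O(n^{-1}))$ by \eqref{eq:Jn}. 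The one-sided integral therefore becomes
\[
 \frac{\sqrt r}{J_n}\Rea\Bigl\{(-1)^q\int_{u_0}^\infty(1+iu)^{-n}(1+iv)^{-n}\,du+\int_{u_0}^\infty(1+iu)^{-n}(1-iv)^{-n}\,du\Bigr\}.
\]

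\textbf{Endpoint expansion.} Write each integrand as $A(u)e^{-in\Phi_\pm(u)}$, with $A$ and $\Phi_\pm=\arctan u\pm\arctan v$ as in Lemma~\ref{lem:onesided}. Split at $\varepsilon$.

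On $[u_0,\varepsilon]$, integrate by parts twice; $\Phi'_\pm\ge\kappa$ there, as in that lemma. The boundary term at $u_0$ is
\[
 \frac{A(u_0)e^{-in\Phi_\pm(u_0)}}{in\Phi'_\pm(u_0)}.
\]
Since $v(u_0)=0$, we have $e^{-in\Phi_\pm(u_0)}=e^{-in\arctan(\pi/(2n))}=-i+O(n^{-2})$. Moreover $A(u_0)=1+O(n^{-1})$ and $\Phi'_\pm(u_0)=1\pm r+O(n^{-2})$. The boundary term is therefore $-1/(n(1\pm r))+O(n^{-2})$. Taking real parts and multiplying by the prefactor $\sqrt{rn/\pi}$ gives the main term in \eqref{eq:moving_asymptotic}.

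The remaining term after the first integration by parts is $(in)^{-1}\int (A/\Phi')'e^{-in\Phi}$. A second integration by parts bounds it by $n^{-2}$ times the sup of $|(A/\Phi')'|$ at the endpoints plus $\int|((A/\Phi')'/\Phi')'|$. The only large factor here is $A'\sim -nuA$, and also $A''\sim n$ near $u\lesssim n^{-1/2}$. The controlling estimate is
\[
 \int_{u_0}^{\varepsilon}|A''|\,du=O\Bigl(n\int_0^\infty e^{-nu^2/2}\,du\Bigr)=O(n^{1/2}).
\]
So the remainder is $O(n^{-3/2})$, and after the prefactor it is $O_r(n^{-1})$.

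The tail over $[\varepsilon,\infty)$ is exponentially small by the power-splitting bound already used in Lemma~\ref{lem:onesided}.

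\textbf{Main obstacle.} The delicate step is keeping the second-order remainder at $O(n^{-3/2})$ rather than $O(n^{-1})$. This hinges on the amplitude derivatives being Gaussian-concentrated, so that their $L^1$ norms grow only like $n^{1/2}$. I would verify this through $A=\exp(-\tfrac n2[\log(1+u^2)+\log(1+v^2)])$ with $0\le u\le\varepsilon$ and $v\le ru$.

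\textbf{Consequence.} Since $0<r<1$, we have $1/(1-r)>1/(1+r)$, so the bracket in \eqref{eq:moving_asymptotic} is positive for both parities of $q$. The left side therefore has modulus $\gtrsim_r n^{-1/2}$.

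On the other hand, the weights in \eqref{eq:onesided} are $w(u_0)w(0)=w(\pi/(2n))\to1$. Any valid bound of the form \eqref{eq:onesided} thus forces $e_n\gtrsim_r n^{-1/2}$. The admissibility of the configuration is also satisfied: the scales are $h$ and $H$, with $H\le1$, $h\le H/\lambda$, and both centres lie in $[0,1]$. Hence the $n^{-1/2}$ term cannot be improved to $o(n^{-1/2})$.
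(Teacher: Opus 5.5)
Your proposal is correct and follows the paper's proof essentially step by step. The shared steps are the real-part product decomposition, the substitution yielding the prefactor $\sqrt r/J_n$, and two integrations by parts at the endpoint. As in the paper, the boundary term $-A(u_0)\sin(n\Phi_\pm(u_0))/(n\Phi'_\pm(u_0))$ gives the leading coefficient, and the Gaussian-concentration bound $\int|A''|=O(n^{1/2})$ keeps the remainder at $O(n^{-3/2})$.

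The only difference is that you split sharply at $\varepsilon$ where the paper uses a smooth cutoff. This produces extra boundary terms at $\varepsilon$ of size $O\bigl((1+\varepsilon^2)^{-n/2}\bigr)$, which you do not mention; you should note that they are exponentially small and hence harmless.
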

\begin{proof}
Put $t=\pi/2$, $u_0=t/n$, and $z=u-u_0\ge0$, so that $v=rz$. The real-part product identity expresses the inner product as
\[
 \frac{\sqrt r}{J_n}\Rea\int_0^\infty
 A_n(z)\bigl(e^{in\Phi_-(z)}+(-1)^qe^{in\Phi_+(z)}\bigr)\,dz,
\]
where
\[
 A_n(z)=\bigl((1+(z+t/n)^2)(1+r^2z^2)\bigr)^{-n/2},
 \qquad \Phi_\pm(z)=\arctan(z+t/n)\pm\arctan(rz).
\]
On a sufficiently small fixed interval $[0,\delta]$, both phase derivatives are bounded away from zero for all large $n$. Choose a smooth cutoff supported in this interval and equal to one near zero. The contribution away from zero is exponentially small. Two integrations by parts in the remaining integral give
\begin{equation}\label{eq:moving_ibp}
 \Rea\int_0^\infty A_n(z)e^{in\Phi_\pm(z)}\,dz
 =-\frac{A_n(0)\sin(n\Phi_\pm(0))}{n\Phi'_\pm(0)}+O_r(n^{-3/2}).
\end{equation}
For completeness, on the support of the cutoff,
\[
 \begin{gathered}
 A_n(z)\le Ce^{-cnz^2},\qquad
 |A'_n(z)|\le C(nz+1)e^{-cnz^2},\\
 |A''_n(z)|\le C\bigl(n+(nz+1)^2\bigr)e^{-cnz^2}.
 \end{gathered}
\]
Thus $\int|A''_n|=O(\sqrt n)$, $\int|A'_n|=O(1)$, and $A'_n(0)=O(1)$. The first three phase derivatives are bounded, so the remainder after two integrations by parts is $O(n^{-2}\sqrt n)$. Derivatives of the cutoff contribute only exponentially small terms.

Finally,
$A_n(0)=1+O(n^{-1})$,
$n\Phi_\pm(0)=t+O(n^{-2})$, and
$\Phi'_\pm(0)=1\pm r+O(n^{-2})$.
Since $\sin t=1$, \eqref{eq:Jn} and \eqref{eq:moving_ibp} imply \eqref{eq:moving_asymptotic}. The bracket in that formula equals $2/(1-r^2)$ for even $q$ and $2r/(1-r^2)$ for odd $q$, so both leading coefficients are nonzero. Also, $w(c_n/(ah))w(0)\to1$. For all sufficiently large $n$, the centres are in $[0,1]$; each scale contains only one centre, so within-scale separation is automatic.
\end{proof}

\begin{figure}[tbp]
\centering
\includegraphics[width=\linewidth]{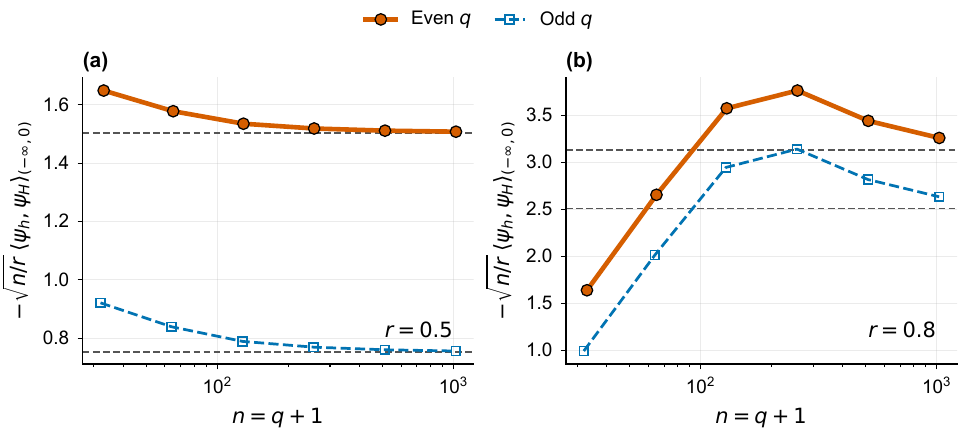}
\caption{The one-sided interaction for the moving-centre construction. Direct integration uses $a=1$, $\beta=0$, $H=1/2$, $h=rH$, and $c_n=ah\pi/(2n)$, with the other centre at zero. The two panels correspond to $r=0.5$ and $r=0.8$. After multiplication by $-\sqrt{n/r}$, the even and odd subsequences approach the positive limits in \eqref{eq:moving_asymptotic} (dashed lines). Ratios closer to one show larger finite-order deviations but the same asymptotic order.}\label{fig:moving}
\end{figure}

\FloatBarrier
\subsection{Sharp asymptotics of the interval coefficient lower bound}
Combining the one-sided interaction with the moving atom's interval energy gives the sharp uniform rate in Theorem~\ref{thm:gram} for a single exponential spectrum.

\begin{corollary}[Two-atom Gram asymptotics and optimal uniform rate]\label{cor:gram_sharp}
Under the assumptions of Proposition~\ref{prop:moving}, let $\mathcal G_q$ be the interval Gram matrix of the ordered pair $(\psi_{h,c_n},\psi_{H,0})$. Set
\[
 \alpha=\frac{\sqrt\pi}{2},\qquad
 \chi_q(r)=\sqrt{\frac r\pi}
     \left(\frac1{1-r}+\frac{(-1)^q}{1+r}\right).
\]
Then, as $q\to\infty$,
\begin{equation}\label{eq:two_atom_gram}
 \mathcal G_q=\frac12 I_2+
 \frac1{\sqrt n}\begin{pmatrix}\alpha&\chi_q(r)\\\chi_q(r)&0\end{pmatrix}
 +O_{a,r}(n^{-1}),
\end{equation}
where the remainder is in matrix operator norm. In particular,
\begin{equation}\label{eq:two_atom_eigenvalue}
 \lambda_{\min}(\mathcal G_q)=\frac12-\frac{\kappa_q(r)}{\sqrt n}
                 +O_{a,r}(n^{-1}),
 \qquad
 \kappa_q(r)=\frac{\sqrt{\alpha^2+4\chi_q(r)^2}-\alpha}{2}>0.
\end{equation}
For the fixed single-exponential kernel and fixed $\lambda,\Delta$, define
\begin{equation}\label{eq:optimal_gram_bound}
 \underline\Lambda_q=
 \inf_{\mathcal C}\lambda_{\min}\bigl(G_I^{(q)}(\mathcal C)\bigr),
\end{equation}
where the infimum is over the normalized interval Gram matrices of all nonempty finite configurations satisfying \eqref{eq:geometry}. Then
\begin{equation}\label{eq:sharp_gram_deficit}
 \frac12-\underline\Lambda_q
 \asymp_{a,\beta,\lambda,\Delta}n^{-1/2}.
\end{equation}
\end{corollary}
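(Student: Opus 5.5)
We compute the three distinct entries of $\mathcal G_q$ separately, read off the eigenvalue from the perturbed $2\times2$ matrix, and then combine with \eqref{eq:single_gram} for the two-sided deficit bound.

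\emph{The entry for $\psi_{H,0}$.} This atom is centred at the endpoint $0$, and its squared modulus is even about $0$. Its interval energy is therefore $1/2$ minus the tail beyond $1$. With $H=1/2$ and the power splitting used for $\tau_n$ in \eqref{eq:diagonal}, that tail is exponentially small in $n$. So this diagonal entry is $1/2+O(n^{-1})$.

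\emph{The entry for $\psi_{h,c_n}$.} Here the interval energy is $1-\int_{-\infty}^0\psi_{h,c_n}^2$, up to an exponentially small right tail. Write $\psi^2=|F|^2+\Rea(i^{2q}F^2)$. The oscillatory part $F^2$ on the half-line is handled with the same phase argument as Proposition~\ref{prop:moving}, now with phase $2\arctan u$. It contributes $O(n^{-1})$ once divided by $J_n$: one integration by parts gives a boundary term of size $1/n$, and after dividing by $J_n\sim n^{-1/2}$ this is $O(n^{-1/2})$. I must check that it is really $O(n^{-1})$. Rather than assume this, I plan to expand it exactly. The non-oscillatory part is
\[
 J_n^{-1}\int_{u_0}^\infty(1+u^2)^{-n}\,du=\tfrac12-J_n^{-1}\int_0^{u_0}(1+u^2)^{-n}\,du=\tfrac12-u_0/J_n+O(n^{-1}),
\]
and $u_0/J_n=(\pi/2n)\sqrt{n/\pi}=\alpha/\sqrt n$. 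The oscillatory boundary term is of the form $-A(u_0)\sin(2nu_0)/(2n\cdot J_n)$ up to sign. Since $2nu_0=\pi$, we have $\sin\pi=0$, so the leading boundary term vanishes and the remainder is $O(n^{-3/2}/J_n)=O(n^{-1})$. This cancellation is the reason for the choice $c_n=ah\pi/(2n)$. Hence the interval energy is $1/2+\alpha n^{-1/2}+O(n^{-1})$.

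\emph{The off-diagonal entry.} The whole-line inner product is bounded by $g^{2n-1}$, and $g\le 2\sqrt r/(1+r)<1$, so it is exponentially small. The right tail is also exponentially small. Therefore the interval entry equals the whole-line entry minus the left one-sided integral, that is, $-(\text{left integral})$. By \eqref{eq:moving_asymptotic} this is $+\chi_q(r)n^{-1/2}+O(n^{-1})$, which gives \eqref{eq:two_atom_gram}.

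\emph{The smallest eigenvalue.} The matrix $\begin{pmatrix}\alpha&\chi\\\chi&0\end{pmatrix}$ has smallest eigenvalue $(\alpha-\sqrt{\alpha^2+4\chi^2})/2=-\kappa_q$. Since eigenvalues are $1$-Lipschitz in operator norm (Weyl), \eqref{eq:two_atom_eigenvalue} follows. Moreover $\kappa_q>0$ because $\chi_q\ne0$ on both parities, as noted after Proposition~\ref{prop:moving}.

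\emph{The deficit \eqref{eq:sharp_gram_deficit}.} For the upper bound, \eqref{eq:single_gram} gives $\lambda_{\min}\ge 1/2-\delta_n$ uniformly over configurations, with $\delta_n=O(n^{-1/2})$. For the lower bound, the two-atom configuration is admissible: it has one centre per scale, and $h=rH\le H/\lambda$ once we choose $r=\lambda^{-1}$. It therefore gives $\underline\Lambda_q\le 1/2-\min_{\pm}\kappa(r)n^{-1/2}/2$ for large $q$. For small $q$ both sides are bounded, since $\underline\Lambda_q\le 1/2$ by using an endpoint atom alone, so the constants can be adjusted.

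\emph{Main obstacle.} The delicate step is the diagonal entry of the moving atom: we must verify that the oscillatory half-line energy term is genuinely $O(n^{-1})$ after normalization. My plan is to use two integrations by parts with a cutoff, exactly as in \eqref{eq:moving_ibp}, and to exploit $\sin(2nu_0)=0$.
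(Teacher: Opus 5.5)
Your proposal is correct, and its overall structure matches the paper's. That includes the endpoint diagonal, the off-diagonal entry from Proposition~\ref{prop:moving} after discarding the exponentially small whole-line and right-tail terms, the Weyl perturbation, and the deficit via \eqref{eq:single_gram} and $r=\lambda^{-1}$.

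The genuine difference is the moving-atom diagonal, which you treat as the main obstacle. You compute the exterior energy $\int_{-\infty}^0\psi_{h,c_n}^2$ and handle its oscillatory part on $[u_0,\infty)$ by two integrations by parts as in \eqref{eq:moving_ibp}, using the vanishing of the first boundary term. The paper needs none of this.

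\begin{itemize}
\item Because $\psi_{h,c}^2$ is exactly even about $c$, the energy on $[c_n,\infty)$ is exactly $1/2$.
\item So $(\mathcal G_q)_{11}$ equals $1/2$ plus the energy on the short interval $[0,c_n]$, up to an exponentially small right tail.
\item The exact identity $ah\,\psi_{h,c}(c+ahu)^2=J_n^{-1}(1+u^2)^{-n}\bigl(1+(-1)^q\cos(2n\arctan u)\bigr)$ turns this into an integral over $[0,\pi/(2n)]$.
\item The substitution $s=nu$ and a Taylor expansion give $(nJ_n)^{-1}\bigl(\pi/2+(-1)^q\int_0^{\pi/2}\cos 2s\,ds+O(n^{-1})\bigr)$. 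No cutoff or amplitude-derivative bounds are needed.
\end{itemize}

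The two computations agree. Evenness is equivalent to $\int_0^\infty(1+u^2)^{-n}\cos(2n\arctan u)\,du=0$, the oscillatory counterpart of $\int_0^\infty(1+u^2)^{-n}\,du=J_n/2$, which you already used for the non-oscillatory part. Hence your oscillatory integral over $[u_0,\infty)$ is minus the one over $[0,u_0]$. Your vanishing boundary term is the same cancellation as $\int_0^{\pi/2}\cos 2s\,ds=\tfrac12\sin\pi=0$. The paper's route is shorter and gives a sharper remainder for this entry: $O(n^{-3/2})$ rather than your $O(n^{-1})$. Yours has the merit of reusing existing machinery verbatim.

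Two small corrections:
\begin{itemize}
\item The boundary term contains $\sin(2n\arctan u_0)=\sin(\pi-O(n^{-2}))=O(n^{-2})$, not exactly $\sin\pi$. This is harmless.
\item For small $q$, the bound $\underline\Lambda_q\le 1/2$ yields only a nonnegative deficit, not a positive one. You would need the strict inequality supplied by the positive right tail of a single endpoint atom. The paper, however, only asserts the two-sided bound for all sufficiently large $q$.
\end{itemize}
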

\begin{proof}
Let $u_n=\pi/(2n)$. Squaring \eqref{eq:single_atoms} gives the exact identity
\[
 ah\,\psi_{h,c}(c+ahu)^2
 =\frac{(1+u^2)^{-n}}{J_n}
   \bigl(1+(-1)^q\cos(2n\arctan u)\bigr).
\]
Both scales are fixed, and $c_n\to0$. The tails beyond the right endpoint are therefore exponentially small, by the estimate used in \eqref{eq:diagonal}. Evenness about each centre gives
\[
 (\mathcal G_q)_{22}=\frac12+O_{a,r}(e^{-\vartheta n})
\]
for some $\vartheta>0$, and
\[
 \begin{split}
 (\mathcal G_q)_{11}
 =\frac12+\frac1{J_n}\int_0^{u_n}
 &(1+u^2)^{-n}\bigl(1+(-1)^q\cos(2n\arctan u)\bigr)\,du\\
 &\hspace{35mm}+O_{a,r}(e^{-\vartheta n}).
 \end{split}
\]
In the short integral, substitute $s=nu$. Uniformly for $0\le s\le\pi/2$,
\[
 (1+s^2/n^2)^{-n}=1+O(n^{-1}),\qquad
 2n\arctan(s/n)=2s+O(n^{-2}).
\]
Since $\int_0^{\pi/2}\cos(2s)\,ds=0$, the integral contribution is
\[
 \frac1{nJ_n}\left(\frac\pi2+O(n^{-1})\right)
 =\frac{\sqrt\pi}{2\sqrt n}+O(n^{-3/2}).
\]
Thus the leading diagonal correction is the same for both parities.

The whole-line inner product has modulus at most
$(2\sqrt r/(1+r))^{2n-1}$, and the right exterior inner product is exponentially small by Cauchy--Schwarz and the tail estimates. Subtracting the two exterior contributions from the whole-line inner product and using Proposition~\ref{prop:moving}, we obtain
\[
 (\mathcal G_q)_{12}
 =\frac{\chi_q(r)}{\sqrt n}+O_{a,r}(n^{-1}).
\]
These entrywise expansions imply \eqref{eq:two_atom_gram}. The smaller eigenvalue of the leading correction matrix is $-\kappa_q(r)$. The variational characterization of the smallest eigenvalue shows that the matrix-norm remainder changes it by at most $O_{a,r}(n^{-1})$, proving \eqref{eq:two_atom_eigenvalue}. Positivity of both parity values follows from
\[
 \chi_q(r)=\frac{2\sqrt{r/\pi}}{1-r^2}
 \begin{cases}1,&q\ \text{even},\\ r,&q\ \text{odd}.
 \end{cases}
\]

Finally, \eqref{eq:single_gram} gives
$\underline\Lambda_q\ge1/2-C_{a,\lambda,\Delta}n^{-1/2}$.
Conversely, choose $r=\lambda^{-1}$ in the two-atom configuration. It is admissible for all sufficiently large $q$, regardless of $\Delta$. Equation~\eqref{eq:two_atom_eigenvalue} and the positivity of both parity constants give
$\underline\Lambda_q\le1/2-cn^{-1/2}$ for some $c>0$ and all sufficiently large $q$. Together these inequalities prove \eqref{eq:sharp_gram_deficit}.
\end{proof}
The sharpness in \eqref{eq:sharp_gram_deficit} is uniform over configurations that may vary with $q$. It does not assert this rate for every fixed configuration or every positive spectral mixture. In particular, the mixture perturbation in Section~\ref{sec:proof} need not have a prescribed rate when the spectral lengths accumulate at their minimum.

\section{Interval Gram matrices in multiscale configurations}\label{sec:gram-experiment}
The coefficient bound concerns cancellation among all atoms in a configuration. We therefore examine the smallest eigenvalue of the full normalized interval Gram matrix
\[
 G_I^{(q)}=
 \left(\int_0^1\psi^{(q)}_{h_\ell,c}(x)
                   \psi^{(q)}_{h_k,d}(x)\,dx\right)_{(\ell,c),(k,d)}.
\]
We use five scales, $h_\ell=2^{-\ell-1}$ for $1\le\ell\le5$, and the kernels
\[
 K(t)=\frac1{\pi(1+t^2)},\qquad
 K(t)=\frac{\pi e^{-\pi t}}{(1+e^{-\pi t})^2}.
\]
Both have $a=1$; the logistic parameter is $\gamma=\pi$. The following three configurations satisfy $\lambda=\Delta=2$.

\paragraph{Shared endpoints.}
At scale $h_\ell$, take
$X_\ell=\{2jh_\ell:0\le j\le(2h_\ell)^{-1}\}$.
Both endpoints, and many interior centres, coincide across scales. The five scales contain $67$ atoms in total.

\paragraph{Moving endpoints.}
Let $M_\ell=\lfloor(3h_\ell)^{-1}\rfloor$ and $s_{\ell,q}=\pi h_\ell/[2(q+1)]$, and set
\[
 X_\ell^{(q)}=
 \left\{s_{\ell,q}+(1-2s_{\ell,q})\frac{j}{M_\ell}:0\le j\le M_\ell\right\}.
\]
This gives $44$ atoms. The extreme centres at each scale approach the endpoints at order $h_\ell/q$. For $q\ge8$, their within-scale spacing is at least $3h_\ell(1-\pi/36)>2h_\ell$.

\paragraph{Interior centres.}
Take $X_\ell=\{(2j+1)h_\ell:0\le j<(2h_\ell)^{-1}\}$. These $62$ atoms provide a comparison with no endpoint centres.

We evaluate $q=8,9,16,17,32,33,64,65,128,129$, including adjacent even and odd orders. The moving geometry is identical for both kernels: $q+1$ specifies the centres independently of the kernel-dependent value of $n$. It differs from the two-atom configuration in Corollary~\ref{cor:gram_sharp}, which keeps one centre exactly at the endpoint. The three families have different atom counts and are intended to illustrate distinct geometries, not to isolate a single endpoint effect or search for the worst configuration.

\begin{figure}[tbp]
\centering
\includegraphics[width=\linewidth]{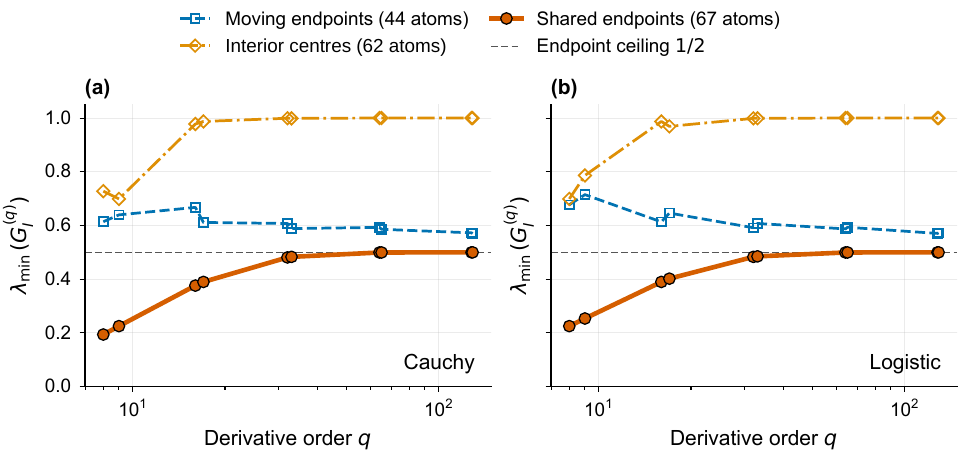}
\caption{Smallest eigenvalues of full interval Gram matrices for five scales. Both kernels use $a=1$ and $\lambda=\Delta=2$; the logistic parameter is $\gamma=\pi$. The shared-endpoint eigenvalue approaches $1/2$ from below, while the interior family approaches $1$. The moving-endpoint family follows a different finite-order trajectory. The dashed line at $1/2$ is the largest possible configuration-uniform coefficient lower bound when endpoint centres are allowed; it is not an upper bound for each individual configuration.}\label{fig:gram}
\end{figure}

In these calculations, the shared-endpoint family has the smallest eigenvalue: it increases from $0.194$ to $0.500$ for Cauchy and from $0.225$ to $0.500$ for logistic over the displayed orders. The limiting diagonal distinguishes endpoint atoms, whose interval energy tends to $1/2$, from fixed interior atoms, whose energy tends to $1$. The smallest eigenvalue tests all coefficient vectors in each finite configuration. The configuration-uniform statement, and its sharp deficit in the single-exponential case, follow from Theorem~\ref{thm:gram} and Corollary~\ref{cor:gram_sharp}, respectively, rather than from these computations.

\paragraph{Evaluation and numerical checks.}
The reported calculations use direct integration of analytic derivative formulas in IEEE double precision, with Python~3.13, NumPy~2.5.3, and SciPy~1.18.1. No differentiation of sampled values is used. For Cauchy, the atoms are given by \eqref{eq:single_atoms}.

For logistic, $\widehat K(\xi)=2|\xi|\sum_{j\ge0}e^{-(2j+1)|\xi|}$. Plancherel's identity and the grouping $j+k+1=m$ give, with $n=q+2$,
\[
 N_q=\frac{4\Gamma(2n-1)}\pi
       \sum_{j,k\ge0}[2(j+k+1)]^{-(2n-1)}
     =\frac{\Gamma(2n-1)}{\pi\,2^{2n-3}}\zeta_{\mathrm R}(2n-2),
\]
where $\zeta_{\mathrm R}(s)=\sum_{m\ge1}m^{-s}$ is the Riemann zeta function. Fourier inversion yields
$K^{(q)}(t)=(2\Gamma(n)/\pi)\Rea\{i^q\sum_{j\ge0}(2j+1-it)^{-n}\}$.
Using $J_n=\pi\Gamma(2n-1)/(2^{2n-2}\Gamma(n)^2)$, we obtain
\begin{equation}\label{eq:logistic_atom}
 \Psi_q(t)=\left(\frac2{J_n\zeta_{\mathrm R}(2n-2)}\right)^{1/2}
 \Rea\left\{i^q\sum_{j=0}^\infty(2j+1-it)^{-n}\right\}.
\end{equation}
After $J$ terms, the omitted complex sum is bounded uniformly in $t$ by
\[
 (2J+1)^{-n}+\frac{(2J+1)^{1-n}}{2(n-1)}.
\]
The truncation is chosen so that the resulting pointwise error in every scaled atom is below $10^{-17}$; the corresponding Gram perturbation bound is less than $1.8\times10^{-16}$ in each calculation. Complex inverse powers are evaluated through logarithms to avoid intermediate overflow.

Composite Gauss--Legendre quadrature uses panels of width $h_*/2$. The first rule has $r_{\mathrm{GL}}(q)=\max\{24,\lceil(q+2)/2\rceil+16\}$ nodes per panel, and the refined rule has $\lceil3r_{\mathrm{GL}}(q)/2\rceil+8$ nodes. The largest spectral-norm difference between the resulting matrices is $1.4\times10^{-13}$. As a boundary check, integration of two Cauchy atoms centred at zero over the positive half-line agrees with half their exact whole-line Gram matrix to within $3.7\times10^{-14}$. Differentiating the physical-space logistic formula at $90$-digit precision also checks the spectral evaluator at $q=8,9,32,129$, including $t=0$ and $t=4$; the largest absolute difference is $2.8\times10^{-14}$.

The two-atom illustrations use adaptive quadrature. Tightening the tolerances from $10^{-10}$ to $2\times10^{-13}$ changes the local-difference ratio by at most $1.2\times10^{-16}$. Tightening the tolerances and increasing the moving-interaction tail cutoff from $16$ to $24$ changes the plotted quantity by at most $2.0\times10^{-11}$. These refinement differences measure numerical agreement; they are not rigorous quadrature error bounds.

\section{A deterministic sampling consequence}\label{sec:sampling}
Within-scale separation and geometric scale separation imply
\begin{equation}\label{eq:dimension_bound}
 D:=\dim V\le2+(1+\Delta^{-1})\sum_\ell h_\ell^{-1}
 \le2+\frac{\lambda(1+\Delta^{-1})}{\lambda-1}h_*^{-1}.
\end{equation}
This inequality does not allow $h_*^{-1}$ to be replaced by $D$ in the inverse bound. If the finest scale also satisfies $\#X_L\ge\kappa h_*^{-1}$, then $h_*^{-1}\le D/\kappa$, and a dimension-proportional inverse estimate follows. Without a density assumption, the four-dimensional spaces in Theorem~\ref{thm:sharp_scale} rule out a bound depending only on dimension.

Let $\nu=\sum_{i=1}^N\omega_i\delta_{x_i}$, where $x_i\in[0,1]$, $\omega_i\ge0$, and $\sum_i\omega_i=1$. Define
\[
 d(\nu)=\sup_{0\le t\le1}|\nu([0,t])-t|,
 \qquad \norm{g}_\nu^2=\sum_i\omega_i|g(x_i)|^2.
\]
\begin{proposition}[Comparison of discrete and continuous norms]\label{prop:sampling}
Every $g\in V$ satisfies
\begin{equation}\label{eq:sampling}
 \abs{\norm{g}_\nu^2-\norm{g}_2^2}\le\epsilon\norm{g}_2^2,
 \qquad \epsilon=2d(\nu)Ch_*^{-1},
\end{equation}
where $C$ is the constant in Theorem~\ref{thm:inverse}. If $\epsilon<1$, sampling is injective, and the sampling Gram matrix of any basis orthonormal in $L^2(I)$ has spectrum in $[1-\epsilon,1+\epsilon]$.
\end{proposition}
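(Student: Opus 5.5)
The plan is a Koksma--Hlawka type comparison applied to $u=|g|^2$, followed by the inverse inequality of Theorem~\ref{thm:inverse}. For real $g\in V$ (complex coefficients can be split into real and imaginary parts, since $V$ has a real basis and both sides are additive over these parts), set $u=g^2$. The function $u$ is smooth on $[0,1]$ because every generator is. Write $F(t)=\nu([0,t])$, and $E(t)=F(t)-t$. Then $|E|\le d(\nu)$, and $E(1)=0$ since $\sum_i\omega_i=1$.

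We write the discrete-minus-continuous integral as a Stieltjes integral and integrate by parts:
\[
 \norm{g}_\nu^2-\norm{g}_2^2=\int_{[0,1]}u\,d(F-t)
 =u(1)E(1)-u(0)E(0^-)-\int_0^1E(t)u'(t)\,dt .
\]
The boundary terms need care because of possible atoms at $0$. With $E(0^-)=0$ (take $F(0^-)=0$) and $E(1)=0$, both vanish. Hence the difference equals $-\int_0^1 E\,u'$, and its modulus is at most $d(\nu)\norm{u'}_{L^1(I)}$. If $\nu$ has a mass at $0$, it is included in $F(0)=\nu([0,0])$, so the Stieltjes integral over the closed interval is consistent with the definition of $d(\nu)$. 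This bookkeeping is the only delicate point, and it is routine.

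Next, $u'=2gg'$. Cauchy--Schwarz and \eqref{eq:inverse} give
\[
 \norm{u'}_{L^1(I)}\le2\norm{g}_2\norm{g'}_2\le2Ch_*^{-1}\norm{g}_2^2,
\]
which yields \eqref{eq:sampling} with $\epsilon=2d(\nu)Ch_*^{-1}$.

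For the consequences, suppose $\epsilon<1$. If $g(x_i)=0$ for all $i$ with $\omega_i>0$, then $\norm{g}_\nu=0$, so $(1-\epsilon)\norm{g}_2^2\le0$ and hence $g=0$. This proves injectivity. Now let $(\phi_k)$ be an $L^2(I)$-orthonormal basis of $V$, and let $S_{jk}=\sum_i\omega_i\phi_j(x_i)\phi_k(x_i)$. For $g=\sum_k b_k\phi_k$ we have $b^{\top}Sb=\norm{g}_\nu^2$ and $\norm{b}_{\ell^2}^2=\norm{g}_2^2$. The estimate \eqref{eq:sampling} therefore places the Rayleigh quotients of the symmetric matrix $S$ in $[1-\epsilon,1+\epsilon]$, and its spectrum lies there as well.
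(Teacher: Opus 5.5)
Your proposal is correct and is essentially the paper's proof. Your Stieltjes integration by parts with $E(0^-)=E(1)=0$ gives the same identity $\int g^2\,d\nu-\int_0^1 g^2=-\int_0^1(\nu([0,t])-t)(g^2)'\,dt$ that the paper obtains by changing the order of integration, and you then finish exactly as the paper does, with Cauchy--Schwarz, Theorem~\ref{thm:inverse}, and the Rayleigh-quotient argument in an $L^2(I)$-orthonormal basis (your reduction to real coefficients is harmless but unnecessary, since the paper takes coefficients real throughout).
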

\begin{proof}
For absolutely continuous $G$, changing the order of integration gives
\[
 \int G\,d\nu-\int_0^1G(t)\,dt
 =-\int_0^1\bigl(\nu([0,t])-t\bigr)G'(t)\,dt.
\]
This identity allows atoms at the endpoints; the values of the cumulative distribution at finitely many nodes do not affect the integral on the right. Taking $G=g^2$ and applying Cauchy--Schwarz and Theorem~\ref{thm:inverse}, we obtain
\[
 \abs{\norm{g}_\nu^2-\norm{g}_2^2}
 \le2d(\nu)\norm{g}_2\norm{g'}_2
 \le\epsilon\norm{g}_2^2.
\]
The spectral statement follows by applying this inequality to coefficient vectors in an $L^2(I)$-orthonormal basis.
\end{proof}
This is the usual passage from an inverse inequality to deterministic sampling stability; see Cohen, Davenport, and Leviatan~\cite[Section~3]{cohen2018}. Equal-weight interior nodes $x_i=i/(N+1)$ have $d(\nu)=1/(N+1)$, equally spaced midpoints have $d(\nu)=1/(2N)$, and one arbitrary point from each equal subinterval gives $d(\nu)\le1/N$. In particular, the interior nodes attain a prescribed relative error $0<\epsilon_0<1$ whenever
\[
 N+1\ge2C\epsilon_0^{-1}h_*^{-1}.
\]
For the prescribed logistic space, $h_*^{-1}=D-1$, so $N\gtrsim D$ suffices, while injectivity requires $N\ge D$. This is optimal in dimension growth, not in the parameter-dependent constants. For an arbitrary sparse configuration, the scale-based sampling condition is only sufficient; it is not a necessary bound for every choice of nodes and weights. Nor does stability in an orthonormal basis control the floating-point condition number of the original generator matrix.

\section{Conclusion}
For kernels with a positive exponential spectral representation, geometric scale separation and within-scale centre separation yield an interval inverse bound of order $h_*^{-1}$, uniformly up to the endpoints. The argument retains exterior diagonal energy and bounds only the exterior off-diagonal interactions; endpoint-distance weights render these estimates uniform over all finite admissible configurations, and a synthesis-operator perturbation argument extends them to positive spectral mixtures.

Two-centre constructions demonstrate that the scale exponent is sharp for every fixed kernel in the class. For a single exponential spectrum, the two-atom Gram expansion yields a sharp $n^{-1/2}$ deficit of the best uniform coefficient lower bound from $1/2$. The sampling consequence is governed by the minimum scale and becomes dimension-proportional only when scale and dimension are comparable.

Several questions remain beyond the present framework: spectra with no attained minimum exponential term, centres located outside the interval, and the dependence of optimal constants on degenerating separation parameters. Each of these settings calls for estimates beyond what the one-sided argument developed here can provide.

\begingroup
\small
\bibliographystyle{abbrv}
\bibliography{references}
\endgroup
\end{document}